\documentclass[10pt]{amsart}
\usepackage[utf8]{inputenc}
\usepackage{amsmath}
\usepackage{amsthm}
\usepackage{amssymb}
\usepackage{mathrsfs}
\usepackage[dvipsnames]{xcolor}
\usepackage{verbatim}
\usepackage{soul}
\usepackage[shortlabels]{enumitem}
\usepackage{mathtools}
\usepackage{tikz}
\usetikzlibrary{decorations.markings, arrows,shapes,positioning}

\tikzstyle{vertex}=[circle, draw, inner sep=0pt, minimum size=6pt]
\usepackage{hyperref}

\usepackage[margin=1in]{geometry}

\usepackage{graphicx}

\newcommand{\cF}{\mathcal{F}}

\newcommand{\cB}{\mathcal{B}}

\newcommand{\Z}{\mathbb{Z}}

\newcommand{\Aut}{\operatorname{Aut}}

\newcommand{\AugBerg}[1]{\Delta_{#1}}
\newcommand{\Berg}[1]{\underline{\Delta}_{#1}}
\newcommand{\indepsets}[1]{\mathcal{I}(#1)}
\newcommand{\bases}[1]{\mathcal{B}(#1)}
\newcommand{\flats}[1]{\mathcal{F}(#1)}

\newcommand{\Cone}{\operatorname{Cone}}
\newcommand{\sgn}{\operatorname{sgn}}

\newcommand{\subdot}{\bullet}

\theoremstyle{plain}

\theoremstyle{plain}
\newtheorem{thm}{Theorem}[section]
\newtheorem{theorem}{Theorem}[section]

\newtheorem{lemma}[theorem]{Lemma}
\newtheorem{cor}[theorem]{Corollary}

\theoremstyle{definition}
\newtheorem{definition}[theorem]{Definition}

\newtheorem{example}[theorem]{Example}

\newtheorem{remark}[theorem]{Remark}

\title{Topology of augmented Bergman complexes}
\author[]{Elisabeth Bullock, Aidan Kelley, Victor Reiner, Kevin Ren,\\ Gahl Shemy, Dawei Shen, Brian Sun, Amy Tao, Zhichun Joy Zhang}

\address{MIT}
\email{edb22@mit.edu}

\address{Washington University in St. Louis}
\email{aidankelley@wustl.edu}

\address{University of Minnesota- Twin Cities}
\email{reiner@umn.edu}

\address{MIT}
\email{kevinren@mit.edu}

\address{Univ. of California- Santa Barbara}
\email{gahlshemy@ucsb.edu}

\address{Washington University in St. Louis}
\email{Shen.dawei@wustl.edu}

\address{Yale University}
\email{brian.sun@yale.edu}

\address{Wellesley College}
\email{atao3@wellesley.edu}

\address{Swarthmore College}
\email{zzhang3@swarthmore.edu}

\subjclass{05B35,52B22, 06A07} 

\keywords{matroid, Bergman, augmented, shellable, closure, topology}

\date{August 30, 2021}


\begin{document}

\maketitle

\begin{abstract}
  The augmented Bergman complex of a matroid is a simplicial complex introduced recently in work of Braden, Huh, Matherne, Proudfoot and Wang. It may be viewed as a hybrid of two well-studied pure shellable simplicial complexes associated to matroids: the independent set complex and Bergman complex. 
  
  It is shown here that the augmented Bergman complex is also shellable, via two different families of 
  shelling orders. Furthermore, comparing the description of its homotopy type induced from the two shellings re-interprets a known convolution formula counting bases of the matroid. The representation of the automorphism group of the matroid on the homology of the augmented Bergman complex turns out to have a surprisingly simple description. This last fact is generalized to closures beyond those coming from a matroid.
\end{abstract}


\section{Introduction}
\label{intro-section}

{\it Matroids} are an abstraction of the combinatorial properties of linear dependence for a set of vectors in a vector space, introduced independently in the 1930s by H. Whitney and T. Nakasawa. Here we will work exclusively with a matroid $M$ on a {\it finite} ground set $E$,
which can be specified by either of these two
subcollections of the subsets $2^E$:
\begin{itemize}
    \item the {\it independent sets} $\indepsets{M}$,
    modeling the subsets of the vectors that are linearly independent, or 
    \item the {\it flats} $\flats{M}$, modeling the subsets of the vectors that are closed under taking linear span.
\end{itemize}
The independent sets and flats satisfy certain axioms {\sf I1, I2, I3} and {\sf F1, F2, F3},
recalled in Section~\ref{background-section} below;
see the book by Oxley \cite{Oxley} for further background on matroids. Both $\indepsets{M}$ and $\flats{M}$ give rise
to well-studied abstract simplicial complexes associated to $M$, as we now explain. 

The first two axioms {\sf I1, I2} for independent sets imply that $\indepsets{M}$ forms an {\it abstract simplicial complex} on vertex set $E$. The third axiom {\sf I3} then implies that all inclusion-maximal
independent sets (called {\it bases}) have the same
cardinality $r(M)$, called the {\it rank} of the matroid. Thus the simplicial complex $\indepsets{M}$ is {\it pure},
meaning that all of its inclusion-maximal faces, called {\it facets}, have the same dimension $r(M)-1$.

On the other hand, one usually considers the collection of flats $\flats{M}$ as a  partial order via inclusion, and then one can construct its {\it order complex} $\Delta \flats{M}$ as the simplicial
complex whose vertices are the flats, and
whose simplices are the linearly ordered collections of flats. There are two extreme flats $\overline{\varnothing}, E$ which are
comparable to all the other flats, so that they form
cone vertices in this simplicial complex $\Delta \flats{M}$. One usually removes these two cone vertices to obtain the topologically more
interesting
{\it Bergman complex}\footnote{Also sometimes known as the order complex of the {\it proper part} $\flats{M} \setminus \{\overline{\varnothing},E\}$; see Subsection~\ref{order-complex-subsection}.} $\Berg{M}:=\Delta( \flats{M} \setminus \{\overline{\varnothing},E\} )$, which turns out to
be pure of dimension $r(M)-2$.

Both $\indepsets{M}$ and $\Berg{M}$
were proven around 1980 (in work of Provan and Billera \cite{BilleraProvan} and of Garsia \cite{Garsia}) to be not only pure, but also {\it shellable}, a property with strong topological and algebraic consequences. In particular,
a pure $d$-dimensional shellable complex $\Delta$ is homotopy equivalent to a $\beta$-fold wedge of $d$-dimensional
spheres, where $\beta$ is its top (reduced homology) {\it Betti number},
or the absolute value $|\tilde{\chi}(\Delta)|$ of its (reduced) {\it Euler characteristic}. Shellability can be viewed as a connectivity property much stronger than {\it gallery-connectedness}.
The hierarchy of purity, gallery-connectedness and shellability are reviewed in Subsection~\ref{pure-gallery-connected-shellable-section} below.

Our goal here is to study the topology of the 
following ``hybrid'' of $\indepsets{M}$ and $\Berg{M}$, introduced in the monumental recent work of
Braden, Huh, Matherne, Proudfoot and Wang \cite{BHMPW1,BHMPW2} that resolved several important conjectures in matroid theory.

\vskip.1in
\noindent
{\bf Definition.}
Given a matroid $M$ on ground set $E$,
the {\it augmented Bergman complex} $\AugBerg{M}$ is
the abstract simplicial complex on
vertex set 
$$
\{y_i: i \in E \} \sqcup \{x_F: \text{proper flats }F \in \flats{M} \setminus \{E\} \}
$$
whose simplices are the subsets
\begin{equation}
\label{typical-augberg-simplex}
\{y_i\}_{i \in I} 
\cup
\{x_{F_1},x_{F_2},\ldots,x_{F_\ell} \}
\end{equation}
for which $I \in \indepsets{M}$
and the proper flats $F_i$ satisfy
$
I \subseteq 
 F_1 \subsetneq 
  F_2 \subsetneq \cdots
  \subsetneq F_\ell \;\; (\subsetneq E).
$

It is noted in \cite[\S2 Prop. 2.3]{BHMPW1} that  $\AugBerg{M}$ is pure of dimension $r(M)-1$, that it is
gallery-connected, and that it 
contains as full-dimensional subcomplexes both the
independent set complex $\indepsets{M}$ (as the simplices in
\eqref{typical-augberg-simplex} with $\ell=0$), and 
the cone over the
Bergman complex $\Berg{M}$ with cone vertex $x_{\overline{\varnothing}}$
(as the simplices in \eqref{typical-augberg-simplex} with $\#I = 0$);  we will denote the latter complex by $\Cone(\Berg{M})=\Delta(\cF\setminus \{E\})$.

This motivates our first main result, proven in Section~\ref{shellability-section}.

\begin{theorem}
\label{shelling-theorem}
The augmented Bergman complex $\AugBerg{M}$ of a matroid is shellable,
via two families of shellings:
\begin{itemize}
    \item [(i)] one family that shells the facets of $\Cone(\Berg{M})$
    first, and the facets of $\indepsets{M}$
     last, 
       \item [(ii)] one family that shells the facets of $\indepsets{M}$ first, and the facets of $\Cone(\Berg{M})$  last.
\end{itemize}
\end{theorem}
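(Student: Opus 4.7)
The plan is to produce explicit lexicographic shelling orders refining a natural stratification of the facets of $\AugBerg{M}$ by the size of the independent-set component. Fix a linear order $e_1 < e_2 < \cdots < e_n$ on $E$. Every facet $\sigma$ of $\AugBerg{M}$ has the form $\{y_i\}_{i \in I} \cup \{x_{F_1}, \ldots, x_{F_\ell}\}$ with $|I| + \ell = r(M)$; maximality of $\sigma$ forces $I$ to be a basis of $F_1 = \cl(I)$ and $F_1 \subsetneq F_2 \subsetneq \cdots \subsetneq F_\ell$ to be a maximal chain of proper flats from $\cl(I)$ up to a coatom. The parameter $k(\sigma) := |I|$ stratifies the facets into levels $0, 1, \ldots, r(M)$, with level $0$ giving the facets of $\Cone(\Berg{M})$ and level $r(M)$ giving the facets of $\indepsets{M}$. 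For shelling (i), I would list facets in increasing order of $k(\sigma)$; for shelling (ii), in decreasing order.

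Within each level $k$, the facets decompose according to the choice of bottom flat $F_1 = F$. For a fixed rank-$k$ flat $F$, the subcomplex of $\AugBerg{M}$ generated by the level-$k$ facets with $F_1 = F$ is combinatorially the join $\indepsets{M|_F} \ast \{x_F\} \ast \Berg{M/F}$, since such a facet is uniquely assembled from a basis of $M|_F$ and a maximal chain of proper flats of $M$ strictly above $F$ (equivalently, a maximal chain of proper flats of $M/F$). By Provan--Billera \cite{BilleraProvan} the complex $\indepsets{M|_F}$ is shellable, and by Björner--Garsia \cite{Garsia} the complex $\Berg{M/F}$ is shellable, so this join is shellable. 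I would refine the within-level order via an outer loop over $F$ (ordered by lex on its lex-first basis of $M|_F$) nesting a Provan--Billera order on the $y$-part and a Björner--Garsia order on the $x$-part; for level $0$ this recovers the Björner--Garsia shelling of $\Berg{M}$, and for level $r(M)$ the Provan--Billera shelling of $\indepsets{M}$. The associated restriction set $R(\sigma)$ should then factor into a $y$-part (externally active elements of the basis $I$ in $M|_{F_1}$) and an $x$-part (Björner--Garsia restriction of the flag in $M/F_1$).

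The main obstacle will be the cross-level shelling condition. In shelling (i), when a level-$k$ facet $\sigma$ is added, every codimension-$1$ subface of $\sigma$ contained in the union of previously listed facets must contain $R(\sigma)$. The only genuinely new case beyond the two classical arguments is the \emph{swap move}, in which a vertex $x_{F_1}$ of $\sigma$ is deleted and replaced by some $y_i$ with $i \in F_1 \setminus I$: the resulting codimension-$1$ face $\tau$ lies only in facets at level $k+1$, which in shelling (i) have not yet been listed, so $\tau$ must already belong to earlier level-$k$ (or lower) facets --- forcing $y_i$ into $R(\sigma)$ and thereby pinning down the precise within-level lex tiebreak. Verifying this swap condition case by case, together with the two within-level shellings handled by the classical arguments, establishes the standard shelling criterion; reversing the level order and dualizing the lex rule on $F$ then yields shelling (ii), completing Theorem~\ref{shelling-theorem}.
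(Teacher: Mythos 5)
Your overall skeleton coincides with the paper's: stratify the facets by $\#I$, run the flats-first order for (i) and the bases-first order for (ii), and reuse the Provan--Billera and Bj\"orner--Garsia shellings inside each stratum (indeed, within a fixed bottom flat $F$ the relevant facets do span a join $\indepsets{M|_F} * \{x_F\} * \Berg{M/F}$). But the decisive step --- verifying the shelling condition for a pair of facets lying in \emph{different} strata, or with different bottom flats --- is exactly the part you defer (``verifying this swap condition case by case''), and your sketch of that step is flawed. First, the swap move as you state it, deleting $x_{F_1}$ and inserting $y_i$ with $i \in F_1 \setminus I$, never produces a face of $\AugBerg{M}$: since $I$ is a basis of $F_1=\overline{I}$, the set $I \cup \{i\}$ is dependent for every $i \in F_1 \setminus I$. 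The correct swap uses $i_0 \in F_2 \setminus F_1$, which yields the level-$(k+1)$ facet $(I\cup\{i_0\},\,F_\bullet\setminus\{F_1\})$, because $\overline{I\cup\{i_0\}}=F_2$. Second, your stated requirement that the codimension-one face $\sigma\setminus\{x_{F_1}\}$ ``must already belong to earlier level-$k$ (or lower) facets'' is not part of the shelling criterion, and it is in fact \emph{false} for shelling (i): in the flats-first order that face lies in no earlier facet, which is precisely why the homology facets there are indexed by bases. What must be checked is only that for each pair $\phi\prec\phi'$ there exists an earlier $\phi''$ with $\phi\cap\phi'\subseteq\phi''\cap\phi'$ and $\#(\phi''\cap\phi')=\#\phi'-1$.

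Those cross-stratum instances require explicit constructions that your plan does not supply. For (i), when $I\neq I'$ one applies axiom {\sf I3} repeatedly to get $I''$ with $I\cap I'\subseteq I''\subsetneq I'$ and $\#I''=\#I'-1$, and takes $\phi''\leftrightarrow(I'',\,\{\overline{I''}\}\cup F'_\bullet)$; this is earlier because it sits at a \emph{lower} level. For (ii), when $F_1\neq F_1'$ one takes $i_0\in F_2'\setminus F_1'$ and $\phi''\leftrightarrow(I'\cup\{i_0\},\,F'_\bullet\setminus\{F_1'\})$; this is earlier because it sits at a \emph{higher} level. In particular, ``reversing the level order and dualizing the lex rule'' does not transfer (i) into (ii): the two cross-stratum arguments are genuinely different. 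A useful byproduct of carrying them out is that no within-level tiebreak between facets with different $I$ (in case (i)) or different bottom flats (in case (ii)) is ever needed, so your lexicographic ordering of the flats $F$ by their lex-first bases, and the worry about ``pinning down the precise within-level lex tiebreak,'' can be dropped entirely once the two constructions above are in place.
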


Shellability immediately implies that the geometric realization of $\AugBerg{M}$ is homotopy equivalent to a wedge of $(r(M)-1)$-dimensional spheres, and gives
a combinatorial expression for the number of
spheres. For example, the  
Provan-Billera shelling of $\indepsets{M}$ shows
that it is homotopy equivalent to a wedge of $(r(M)-1)$-spheres, and the number of spheres in
the wedge is the evaluation $T_M(0,1)$ of
the famous {\it Tutte polynomial} $T_M(x,y)$; see Bj\"orner \cite[\S 7.3]{Bjorner-chapter}.
Similarly, Garsia's shelling of
$\Berg{M}$ shows that it is homotopy equivalent to a $T_M(1,0)$-fold 
wedge of $(r(M)-2)$-dimensional spheres; 
see Bj\"orner \cite[\S 7.4, 7.6]{Bjorner-chapter}.

Theorem~\ref{shelling-theorem} gives the following description of the homotopy type
of $\AugBerg{M}$, proven in Section~\ref{homotopy-type-section}.  It
involves the set $\bases{M}$ of {\it bases} of $M$,
which we recall are the maximal independent sets, indexing the facets of $\indepsets{M}$. 

\begin{cor}
\label{homotopy-type-corollary}
For a matroid $M$, the augmented Bergman complex $\AugBerg{M}$
is homotopy equivalent to a $\beta$-fold wedge of
$(r(M)-1)$-spheres, with two different expressions for $\beta$:
\begin{align}
\label{flag-to-basis-expression-for-beta}
\beta&=\#\bases{M}\quad (=T_M(1,1))\\
\label{basis-to-flag-expression-for-beta}
\beta&=\sum_{F \in \flats{M}} T_{M|_F}(0,1) \cdot T_{M/F}(1,0).
\end{align}
Expressions \eqref{flag-to-basis-expression-for-beta},
\eqref{basis-to-flag-expression-for-beta} for $\beta$ 
are predicted by the shellings in 
Theorem~\ref{shelling-theorem}(i),(ii),
respectively.
\end{cor}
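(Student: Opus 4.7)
The plan is to invoke the standard fact that a pure shellable complex of dimension $d$ is homotopy equivalent to a wedge of $d$-spheres indexed by the \emph{homology facets} of any shelling $F_1,\ldots,F_N$, where $F_i$ is a homology facet when its restriction $\mathcal{R}(F_i)=\{v\in F_i : F_i\setminus\{v\}\subseteq F_1\cup\cdots\cup F_{i-1}\}$ equals $F_i$ itself.  Since Theorem~\ref{shelling-theorem} provides two shellings of the $(r(M)-1)$-dimensional complex $\AugBerg{M}$, one only has to enumerate homology facets in each shelling in order to produce the two expressions for $\beta$.

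For \eqref{flag-to-basis-expression-for-beta}, apply shelling~(i).  Its initial segment shells $\Cone(\Berg{M})$, which is contractible (being a cone with apex $x_{\overline{\varnothing}}$), so the homology-facet count within this segment is zero; hence no facet of $\Cone(\Berg{M})$ is a homology facet of the full shelling.  The mixed facets $\{y_i\}_{i\in I}\cup\{x_{F_1},\ldots,x_{F_\ell}\}$ with $I\neq\varnothing$ and $\ell\geq 1$ come next, and we will verify in Section~\ref{shellability-section} that each has a nontrivial codimension-one face missing from the previously shelled part, so none is a homology facet.  Finally the facets $\{y_i\}_{i\in B}$ indexed by bases $B\in\bases{M}$ are shelled: for each $i\in B$ the closure of the independent set $B\setminus\{i\}$ is a hyperplane $H_i$, so $\{y_j\}_{j\in B\setminus\{i\}}$ lies in the previously shelled mixed facet $\{y_j\}_{j\in B\setminus\{i\}}\cup\{x_{H_i}\}$; thus $\mathcal{R}(\{y_i\}_{i\in B})$ is all of $\{y_i\}_{i\in B}$, making each such facet a homology facet.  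This gives $\beta=\#\bases{M}$, and the identity $\#\bases{M}=T_M(1,1)$ is a classical Tutte evaluation.

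For \eqref{basis-to-flag-expression-for-beta}, apply shelling~(ii), and group homology facets by their ``bottom flat'' $F$, defined as $F_1$ when $\ell\geq 1$ and as $E$ when $\ell=0$.  The $F=E$ contribution comes from the pure $\indepsets{M}$-facets, shelled first in (ii) via a Provan-Billera-type order whose homology-facet count is $T_M(0,1)=T_{M|_E}(0,1)\cdot T_{M/E}(1,0)$.  For each flat $F\in\flats{M}\setminus\{E\}$, a facet with $F_1=F$ is determined by a basis $I$ of $M|_F$ together with a maximal flag of flats of $M/F$; shelling~(ii) will be designed in Section~\ref{shellability-section} so that the restriction $\mathcal{R}$ factors as the union of a Provan-Billera restriction on $I$ relative to the shelling of $\indepsets{M|_F}$ and a Garsia restriction on the flag relative to the shelling of $\Berg{M/F}$.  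Consequently such a facet is a homology facet precisely when both factors are homology facets for the respective shellings, so the homology facets with a given $F_1=F$ number $T_{M|_F}(0,1)\cdot T_{M/F}(1,0)$.  Summing over all $F\in\flats{M}$ produces \eqref{basis-to-flag-expression-for-beta}.

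The main obstacle is extracting the restriction behaviour described above from the explicit shellings of Section~\ref{shellability-section}: in particular, ruling out mixed facets as homology facets in shelling~(i), and verifying the product factorisation of $\mathcal{R}$ in shelling~(ii).  Once both counts are in place, equating them recovers the convolution identity $T_M(1,1)=\sum_{F\in\flats{M}}T_{M|_F}(0,1)\cdot T_{M/F}(1,0)$ advertised in the introduction.
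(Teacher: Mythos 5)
Your overall strategy is the same as the paper's: count homology facets in the two shellings of Theorem~\ref{shelling-theorem}, show that in shelling~(i) they are exactly the facets $\{y_i\}_{i\in B}$ for $B\in\bases{M}$, and in shelling~(ii) group them by the flat $F=\overline{I}$ so the count becomes $\sum_F T_{M|_F}(0,1)\,T_{M/F}(1,0)$. The parts you carry out are fine: your argument that every basis facet is a homology facet (using the earlier facet $\{y_j\}_{j\in B\setminus\{i\}}\cup\{x_{\overline{B\setminus\{i\}}}\}$) is exactly the paper's, and your contractibility argument disposing of the facets of $\Cone(\Berg{M})$ is a valid small variation. But the two statements you defer (``we will verify in Section~\ref{shellability-section} \dots'') are precisely the content of the paper's proof, and they are nowhere verified in your proposal. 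For \eqref{flag-to-basis-expression-for-beta} you must show no facet $\phi\leftrightarrow(I,F_\subdot)$ with $F_\subdot\neq\varnothing$ and $I\neq\varnothing$ is a homology facet; the paper does this by exhibiting a specific missed codimension-one face, namely $\phi\setminus\{x_{F_1}\}$: any facet containing it either has an independent set of size $\#I+1$ (hence comes \emph{later} in the flag-to-basis order) or has $I'=I$, forcing $F_1'=\overline{I}=F_1$ and $\phi'=\phi$. You assert the existence of such a missed face without identifying it or giving this argument.

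For \eqref{basis-to-flag-expression-for-beta} the gap is more serious than a deferral, because the ``product factorisation'' of $\mathscr{R}$ you announce is not literally correct: a facet with $F_1=F\neq E$ contains the vertex $x_{F_1}$, which belongs neither to the Provan--Billera factor (on $I\subseteq F$) nor to the Garsia factor (on the chain $F_2\setminus F\subsetneq\cdots\subsetneq F_\ell\setminus F$ in $\Berg{M/F}$). If $\mathscr{R}(\phi)$ were just the union of those two restrictions, it could never equal all of $\phi$, and your count would give zero homology facets for every $F\neq E$. The missing ingredient is the paper's Case~1: $x_{F_1}$ \emph{always} lies in $\mathscr{R}(\phi)$, because choosing $i_0\in F_2\setminus F_1$ produces the earlier facet $(I\cup\{i_0\},\,F_\subdot\setminus\{F_1\})$ (earlier since its independent set is larger in the basis-to-flag order) containing $\phi\setminus\{x_{F_1}\}$. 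Together with the two case analyses showing that for $x_{F_j}$, $j\geq 2$, and for $y_i$, membership in $\mathscr{R}(\phi)$ is governed by the chosen shellings of $\Berg{M/F_1}$ and $\indepsets{M|_{F_1}}$ respectively, this yields the correct characterization (conditions (I),(II) of the paper) and hence the count. As written, your proposal names these verifications as ``the main obstacle'' but does not supply them, so the proof is incomplete at exactly the steps where the work lies.
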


\noindent
Here $M|_F$ and $M/F$ are the matroids obtained from $M$
by {\it restriction} to the flat $F$ and {\it contraction}
on the flat $F$, respectively; see Subsection~\ref{matroid-subsection} below.
We remark that the concordance between
the two expressions for $\beta$ in
Corollary~\ref{homotopy-type-corollary} comes from
specializing a well-known Tutte polynomial convolution formula \cite{EtienneLasVergnas, KookReinerStanton}:
$$
T_M(x,y)=\sum_{F \in \flats{M}} T_{M|_F}(0,y) \cdot T_{M/F}(x,0).
$$

Our last main result is a surprisingly simple description for the representation
on the homology of the augemented Bergman
complex given by the action of the matroid's {\it automorphism group} 
$$
\begin{aligned}
\Aut(M)&:=
\{\text{bijections }E \overset{\sigma}{\longrightarrow} E: \sigma(I) \in \indepsets{M} \text{ for all }I \in \indepsets{M}\}\\
&=\{\text{bijections }E \overset{\sigma}{\longrightarrow} E: \sigma(F) \in \flats{M} \text{ for all }I \in \flats{M}\}.
\end{aligned}
$$

\begin{cor}
\label{homology-rep-theorem}
The action of $\Aut(M)$ on the top (reduced)
homology $\tilde{H}_{r(M)-1}(\AugBerg{M},\Z)$
is the same as its action on the top
oriented simplicial chain group $\tilde{C}_{r(M)-1}(\indepsets{M},\Z)$ for the
complex $\indepsets{M}$.
\end{cor}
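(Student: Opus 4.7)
The plan is to exhibit an $\Aut(M)$-stable contractible subcomplex $X^{(+)} \subseteq \AugBerg{M}$ such that the simplices of $\AugBerg{M}$ lying outside $X^{(+)}$ are exactly the bases of $M$ (the top-dimensional faces of $\indepsets{M}$); the long exact sequence of the pair $(\AugBerg{M}, X^{(+)})$ will then directly produce the claimed identification.

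Define $X^{(+)}$ to be the subcomplex of $\AugBerg{M}$ generated by the facets of the form \eqref{typical-augberg-simplex} with $\ell \geq 1$. A direct case analysis shows that a simplex $\sigma$ of $\AugBerg{M}$ fails to lie in $X^{(+)}$ exactly when $\sigma$ is a basis of $M$: any simplex containing some $x_F$ is a face of such a generating facet (extend to a facet using purity); any independent set $I$ with $|I| < r(M)$ extends to a non-$\indepsets{M}$ facet by taking any saturated chain of proper flats from $\cl_M(I)$ up to some hyperplane containing it; whereas if $B$ is a basis then $\cl_M(B) = E$, so no proper flat can contain $B$. Consequently the relative chain complex $C_*(\AugBerg{M}, X^{(+)}, \Z)$ vanishes outside degree $r(M) - 1$, and in that degree is canonically isomorphic to $\tilde{C}_{r(M)-1}(\indepsets{M}, \Z)$ as a $\Z[\Aut(M)]$-module.

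The main step is to show that $X^{(+)}$ is contractible, for which I would use the Nerve Lemma. For each hyperplane $H$ of $M$ (proper flat of rank $r(M) - 1$), set
\[
Y_H = \bigl\{\{y_i\}_{i \in I} \cup \{x_{F_1}, \ldots, x_{F_k}\} \in \AugBerg{M} \,:\, \cl_M(I) \subseteq H \text{ and } F_k \subseteq H\bigr\},
\]
with the conditions read as vacuous if $I = \varnothing$ or $k = 0$. A dimension count forces the top flat of every facet of $X^{(+)}$ to be a hyperplane, so $X^{(+)} = \bigcup_H Y_H$. For any $\sigma \in Y_H$ the simplex $\sigma \cup \{x_H\}$ still belongs to $Y_H$, so $Y_H$ is a simplicial cone with apex $x_H$, hence contractible. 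The same argument applied to the proper flat $K := H_1 \cap \cdots \cap H_m$ (proper since $K \subseteq H_1 \neq E$) shows that $Y_{H_1} \cap \cdots \cap Y_{H_m} = Y_K$ is a cone with apex $x_K$. Every finite intersection of the $Y_H$'s is therefore nonempty and contractible, so by Björner's Nerve Lemma $X^{(+)}$ is homotopy equivalent to the nerve of $\{Y_H\}$. Since all these intersections are nonempty, the nerve is the full simplex on the set of hyperplanes of $M$, which is contractible; hence so is $X^{(+)}$.

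With $X^{(+)}$ contractible and $\Aut(M)$-invariant, the long exact sequence of the pair $(\AugBerg{M}, X^{(+)})$ collapses to an $\Aut(M)$-equivariant chain of isomorphisms
\[
\tilde{H}_{r(M)-1}(\AugBerg{M}, \Z) \cong H_{r(M)-1}(\AugBerg{M}, X^{(+)}, \Z) \cong \tilde{C}_{r(M)-1}(\indepsets{M}, \Z).
\]
The hardest part is establishing the coning structure on each intersection $Y_{H_1} \cap \cdots \cap Y_{H_m}$ — i.e., verifying that $\sigma \cup \{x_K\}$ is always a legal augmented Bergman simplex — since this is what makes the Nerve Lemma apply; once in hand, the rest is purely formal.
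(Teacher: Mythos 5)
Your proposal is correct, but it reaches the key fact by a genuinely different route than the paper. Both arguments reduce the corollary to showing that the subcomplex obtained from $\AugBerg{M}$ by deleting the (open) basis facets --- your $X^{(+)}$, the paper's $\Delta'=\Delta\setminus\cB$ --- is contractible and $\Aut(M)$-stable, and then both finish identically, via the long exact sequence of the pair and the observation that the relative chain complex is concentrated in degree $r(M)-1$ with basis the oriented basis facets (this last step is packaged in the paper as Lemma~\ref{homology-facets-lemma}). Where you differ is in proving contractibility: the paper gets it for free from the flag-to-basis shelling of Theorem~\ref{shelling-theorem}(i), whose homology facets are exactly the basis facets, using the general fact that deleting all homology facets of a shelling leaves a shellable complex with no homology facets, hence a contractible one. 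You instead cover $X^{(+)}$ by the cones $Y_H$ indexed by hyperplanes $H$ (legitimate, since your dimension count correctly shows every facet of $X^{(+)}$ has top flat a hyperplane), check that any intersection $Y_{H_1}\cap\cdots\cap Y_{H_m}=Y_K$ with $K=H_1\cap\cdots\cap H_m$ a proper flat is again a cone with apex $x_K$ (the coning verification you flag is indeed routine: $F_k\subseteq K$ and $I\subseteq\overline{I}\subseteq K$ make $\sigma\cup\{x_K\}$ a legal chain), and apply the Nerve Lemma to a nerve that is a full simplex. Your route is self-contained and independent of shellability, which is a genuine economy if one wants only this corollary; the paper's route costs nothing extra once Theorem~\ref{shelling-theorem} is in hand and simultaneously yields the Betti number statements. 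Your argument is also close in spirit to the paper's Section~\ref{other-closures-section}, where the contractibility of the same deleted subcomplex is proved for an arbitrary closure operator by barycentric subdivision plus Quillen's Fiber Lemma (Theorem~\ref{closure-augberg-homology-representation-theorem}); note, however, that your hyperplane-cover argument uses purity of the matroid case (every facet of $X^{(+)}$ tops out at a hyperplane), so as written it does not directly extend to general closures, whereas the paper's Section~\ref{other-closures-section} method does.
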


\noindent
More explicitly, this means that the
action is a signed permutation representation of $\Aut(M)$, in which $\Z$-basis elements $[B]$ indexed by bases $B$ in $\bases{M}$ are permuted up to (explicit) signs; see Remark~\ref{signed-permutation-rep-remark} below.

Corollary~\ref{homology-rep-theorem} is generalized
in Section~\ref{other-closures-section} to a statement
(Theorem~\ref{closure-augberg-homology-representation-theorem}) 
that applies beyond matroids, to a finite set $E$ equipped with an
arbitrary {\it closure operator} 
$f: 2^E \longrightarrow 2^E$.

\begin{example}
\label{uniform-matroid-example}
Let $M$ be the uniform matroid of rank $2$
on ground set $E=\{1,2,3\}$, which has
 three bases $\cB=\{\{1,2\},\{1,3\},\{2,3\}\}$.
Figure \ref{exampleofM1} depicts the Hasse diagram of its lattice of flats $\cF$ on the left, along
with the simplicial complexes $\indepsets{M}, \Cone(\Berg{M})$ and $\AugBerg{M}$ from left to right.
All three are all (pure) $1$-dimensional complexes, that is, graphs.
For graphs, shellability is equivalent to
being connected.

\begin{figure}[h]
\begin{center}
\begin{minipage}{.15\textwidth}
\begin{tikzpicture}
  [scale=.33,auto=left,every node/.style={circle,fill=black!20}]
  \node (n123) at (3,8) {123};
  \node (n1) at (0,4)  {1};
  \node (n2) at (3,4)  {2};
  \node (n3) at (6,4)  {3};
  \node (n0) at (3,0) {$\varnothing$};
   \foreach \from/\to in {n123/n1, n123/n2, n123/n3, n1/n0, n2/n0, n3/n0}
    \draw (\from) -- (\to);
\end{tikzpicture}
\end{minipage}
\quad \quad \quad
\begin{minipage}{.2\textwidth}
\begin{tikzpicture}
  [scale=.33,auto=left,every node/.style={circle,fill=black!20}]
  \node (y1) at (4,8) {$y_1$};
  \node (y2) at (0,0) {$y_2$};
  \node (y3) at (8,0) {$y_3$};
  \foreach \from/\to in {y1/y2, y1/y3, y2/y3}
    \draw (\from) -- (\to);
\end{tikzpicture}
\end{minipage}
\quad
\begin{minipage}{.2\textwidth}
\begin{tikzpicture}
  [scale=.33,auto=left,every node/.style={circle,fill=black!20}]
  \node (x0) at (6,6) {$x_{\varnothing}$};
  \node (x1) at (6,9)  {$x_1$};
  \node (x2) at (3,3)  {$x_2$};
  \node (x3) at (9,3) {$x_3$};
  \foreach \from/\to in {x0/x1, x0/x2,x0/x3}
    \draw (\from) -- (\to);
\end{tikzpicture}
\end{minipage}
\begin{minipage}{.2\textwidth}
\begin{tikzpicture}
  [scale=.33,auto=left,every node/.style={circle,fill=black!20}]
  \node (x0) at (6,6) {$x_{\varnothing}$};
  \node (x1) at (6,9)  {$x_1$};
  \node (x2) at (3,3)  {$x_2$};
  \node (x3) at (9,3) {$x_3$};
  \node (y1) at (6,12) {$y_1$};
  \node (y2) at (0,0) {$y_2$};
  \node (y3) at (12,0) {$y_3$};
  \foreach \from/\to in {x0/x1, x0/x2,x0/x3, x1/y1, x2/y2, x3/y3, y1/y2, y1/y3, y2/y3}
    \draw (\from) -- (\to);
\end{tikzpicture}
\end{minipage}
\end{center}
\caption{The poset $\mathcal{F}(M)$ and the complexes $\indepsets{M}, \Cone(\Berg{M}),$ and $\AugBerg{M}$ from Example~\ref{uniform-matroid-example}.}
\label{exampleofM1}
\end{figure}

The augmented Bergman complex has homology $\tilde{H}^1(\AugBerg{M},\Z) \cong \Z^3$, with
$\Z$-basis $\{ [y_1,y_2], [y_1,y_3], [y_2,y_3]\}$, indexed by
the three bases $\bases{M}$;  here one should interpret
oriented edges as
$[y_i,y_j]=-[y_j,y_i]$. The matroid $M$ has automorphism group $G=\mathfrak{S}_3$ acting on the vertices via $g(y_i)=y_{g(i)}$.
Corollary~\ref{homology-rep-theorem} then tells us,
for example, that the transposition $g=(1,3)$ in $\mathfrak{S}_3$ acts on $\tilde{H}_1(\Delta,\Z)$ sending 
$$
\begin{aligned}
g([y_1,y_3])&=[y_3,y_1]=-[y_1,y_3],\\
g([y_1,y_2])&=[y_3,y_2]=-[y_2,y_3].
\end{aligned}
$$

\end{example}


\begin{example}
    The {\it Boolean matroid} $M$ on $E=\{1,2,\ldots,n\}$ of rank $n$ has
    only one basis, $E$ itself, that is, $\bases{M}=\{E\}$. Here
    the augmented Bergman complex $\AugBerg{M}$ triangulates an $(n-1)$-sphere that turns out to be isomorphic to the boundary complex of an $n$-dimensional convex polytope known as the {\it stellohedron}; see \cite[Footnote 7]{BHMPW1} and \cite[\S 10.4]{PostnikovReinerWilliams}. The examples with $n=2,3$ are depicted in Figure~\ref{stellohedra-figure}.

\begin{figure}[h]
\begin{center}
\quad\quad
\begin{minipage}{.2\textwidth}
\begin{tikzpicture}
  [scale=.33,auto=left,every node/.style={circle,fill=black!20}]
  \node (y1) at (2,0) {$y_1$};
  \node (y2) at (6,0) {$y_2$};
  \node (x1) at (0,3) {$x_1$};
   \node (x0) at (4,5) {$x_\varnothing$};
    \node (x2) at (8,3) {$x_2$};
  \foreach \from/\to in {y1/y2, y1/x1, y2/x2, x1/x0, x0/x2}
    \draw (\from) -- (\to);
\end{tikzpicture}
\end{minipage}
\qquad\qquad
\begin{minipage}{.2\textwidth}
\begin{tikzpicture}
  [scale=.33,auto=left,every node/.style={circle, minimum size=0.3cm, fill=black!20}]
    \node (y1) at (6.5,11) {$y_1$};
    \node (y2) at (3,6.5) {$y_2$};
    \node (y3) at (9,6.5) {$y_3$};
    \node (x0) at (6,8) {$x_\varnothing$};
    \node (x1) at (5,15) {$x_1$};
    \node (x2) at (0,3) {$x_2$};
    \node (x3) at (12,3) {$x_3$};
    \node (x12) at (0,11) {$x_{12}$};
    \node (x13) at (12,11) {$x_{13}$};
    \node (x23) at (6,0) {$x_{23}$};
  \foreach \from/\to in {y1/y2, y1/y3, y2/y3,x1/x12,x1/x13,x2/x12,x2/x23,x3/x13,x3/x23,x1/y1,x2/y2,x3/y3,y1/x12,y1/x13,y2/x12,y2/x23,y3/x13,y3/x23}
    \draw (\from) -- (\to);
    \foreach \from/\to in {x0/x12,x0/x13,x0/x23,x0/x1,x0/x2,x0/x3}
    \draw[dashed] (\from) -- (\to);
\end{tikzpicture}
\end{minipage}
\end{center}
\caption{The complex $\AugBerg{M}$ for a rank $n$ Boolean matroid, where $\AugBerg{M}$ is
the boundary of the $n$-dimensional stellohedron.
For $n=2$, the stellohedron is a pentagon, shown at left. For $n=3$ the stellohedron has $16$ boundary triangles (not shaded here), shown at right.}
\label{stellohedra-figure}
\end{figure}

\noindent
Since there is only one basis for $M$, 
Corollary~\ref{homology-rep-theorem}
implies that $\tilde{H}_{n-1}(\AugBerg{M},\Z) \cong \Z$,
with $\Z$-basis element the oriented simplex $[E]=[y_1,\ldots,y_n]$; this corresponds to the homology orientation
class of the boundary sphere of
the stellohedron. 
Here $\Aut(M)$ is the symmetric group permuting $\mathfrak{S}_n$, and it
    acts via the sign representation:  $g([E])=\sgn(g) \cdot [E]$ 
    for every permutation $g$ in $\mathfrak{S}_n$.
\end{example}

\section{Background}
\label{background-section}

\subsection{Matroids}
\label{matroid-subsection}
We begin with two axiomatizations of matroids;
see Oxley \cite[Chap. 1]{Oxley} for other axioms.

\begin{definition}({\it Matroids defined by independent sets})
\label{independent-set-axioms-definition}
A {\it matroid} $M$ on ground set $E$ is a collection
$\indepsets{M} \subseteq 2^E$, called its 
{\it independent sets}, satisfying axioms:
\begin{itemize}
    \item[{\sf I1.}] $\varnothing \in \indepsets{M}$.
    \item[{\sf I2.}] $I \subseteq J$ and $J \in \indepsets{M}$ implies $I \in \indepsets{M}$.
    \item[{\sf I3.}] If $I,J \in \indepsets{M}$ and $\#I <
    \#J$, then there exists $j \in J \setminus I$ with
    $I \cup \{j\} \in \indepsets{M}$.
\end{itemize}
\end{definition}

\noindent
From Axioms {\sf I1, I2}, one sees that the collection $\indepsets{M}$ forms an abstract simplicial complex on vertex set $E$. Axiom {\sf I3} shows that it
is pure of dimension $r-1$ where $r=r(M)$
is the cardinality of all maximal independent sets $B$, called the {\it bases} $\bases{M}$.

Equivalently, one can define a matroid via flats.

\begin{definition}({\it Matroids defined by flats})
\label{flat-axioms-definition}
A {\it matroid} $M$ on ground set $E$ is a collection
$\flats{M} \subseteq 2^E$, called its 
{\it flats}, satisfying axioms:
\begin{itemize}
    \item[{\sf F1.}] $E \in \flats{M}$.
    \item[{\sf F2.}] $F,G \in \flats{M}$ implies $F \cap G \in \flats{M}$.
    \item[{\sf F3.}] For every $F \in \flats{M}$ and
    $e \in E \setminus F$, there exists a unique $G \in \flats{M}$ containing $e$ that {\it covers} $F$ in this sense: there does not exist $H \in \flats{M}$
        with $F \subsetneq H \subsetneq G$.
\end{itemize}
\end{definition}

Assume from here on that $E$ is {\bf finite}. Then
the flats $\flats{M}$ let one define the {\it matroid closure} operator
\begin{equation}
    \label{matroid-closure}
\begin{array}{rcl}
2^E &\longrightarrow &2^E \\
A & \longrightarrow & \overline{A}
\end{array}
\end{equation}
where $\overline{A}$ is the smallest flat containing $A$, namely
$$
\overline{A}:= \bigcap_{\substack{F \in \flats{M}:\\ F \supseteq A}} F.
$$
We will also wish to view $\flats{M}$ as a partially ordered set ({\it poset}) via inclusion, with unique bottom element $\overline{\varnothing}$ and top element $E$. In fact, $\flats{M}$ is a {\it ranked lattice}, with
rank function $r: \flats{M} \rightarrow \{0,1,2,\ldots,r\}$ satisfying $r(\overline{\varnothing})=0$, $r(E)=r$, and
$r(G)=r(F)+1$ if there is no
flat $H$ satisfying $F \subsetneq H \subsetneq G$.

Passing between the independent sets and flats of a matroid $M$ is not hard. First, given any subset $A \subseteq E$, one can define its
{\it rank function} $r(A)$ either using $\indepsets{M}$ or $\flats{M}$ as follows:
$$
\begin{aligned}
r(A)&=\max\{ \#I : I \in \indepsets{M}, I \subseteq A\}\\
&=r(\overline{A}).
\end{aligned}
$$
Then one can recover either of $\indepsets{M}$ or $\flats{M}$ from the rank function $r$ as follows: 
$$
\begin{aligned}
\indepsets{M}&:=\{I \subseteq E: r(I)=\#I\},\\
\flats{M}&:=\{F \subseteq E: r(F) < r(F \cup \{e\}) \text{ for all }e \in E \setminus F\}
\end{aligned}
$$

The following two matroid constructions will turn out to be useful in the sequel.
\begin{definition}
Given a matroid $M$ on ground set $E$, and a subset $A \subseteq E$, one can define two new matroids, the {\it restriction} $M|_A$, and the {\it contraction} $M/A$ as follows:
$$
\begin{aligned}
\indepsets{M|_A}&:=\{ I \in \indepsets{M}: I \subseteq A \} = \indepsets{M} \cap 2^A,\\
\flats{M/A}&:=\{ F \setminus A: F \in \flats{A}, F \supseteq A\}.
\end{aligned}
$$
\end{definition}
\noindent
In particular, for a flat $F$, one has a poset isomorphism between  $\flats{M/F}$ and the poset interval
$$
[F,E]:=\{G \in \flats{M}: F \subseteq G \subseteq E\}.
$$
The isomorphism sends a flat $G$ in the interval $[F,E]$ of $\flats{M}$ to the flat $G\setminus F$ in
$\flats{M/F}$.

\subsection{Order complexes}
\label{order-complex-subsection}

Several simplicial complexes that we will consider come from this construction.

\begin{definition}
Given any partially ordered set $P$, its
{\it order complex} $\Delta P$ is the
abstract simplicial complex on vertex set $P$,
whose simplices are the totally ordered subsets of $P$.
\end{definition}

An element of the poset $P$ that is comparable to all others (such as a least element or greatest element) will give rise to a cone vertex in $\Delta P$, so that $\Delta P$ is contractible. For this reason, such elements are often removed before forming the order complex. For example, since the poset of flats $\flats{M}$ has
least element $\overline{\varnothing}$ and greatest element $E$, they are removed before forming the {\it Bergman complex}
$$
\Berg{M}:=\Delta( \flats{M} \setminus \{ \overline{\varnothing}, E \}).
$$
However, we sometimes put back in the bottom
element $\overline{\varnothing}$ to consider the
cone over $\Berg{M}$, which we denote 
$$
\Cone(\Berg{M}):=\Delta( \flats{M} \setminus \{ E \}.
$$

We recall here from the Introduction that both
$\Berg{M}$ and $\indepsets{M}$ 
are subcomplexes of the following complex, introduced recently in the work of Braden, Huh, Matherne, Proudfoot and Wang \cite[Def. 2.2]{BHMPW1}, and central to their further work \cite{BHMPW2}.

\begin{definition}
Given a matroid $M$ on ground set $E$,
the {\it augmented Bergman complex} $\AugBerg{M}$ is
the abstract simplicial complex on
vertex set 
$$
\{y_i: i \in E \} \sqcup \{x_F: \text{proper flats }F \in \flats{M} \setminus \{E\} \}
$$
whose simplices are the subsets
\begin{equation}
\{y_i\}_{i \in I} 
\sqcup
\{x_{F_1},x_{F_2},\ldots,x_{F_\ell} \}
\end{equation}
for which $I \in \indepsets{M}$
and the (possibly empty) proper flats $F_i$ satisfy
$
I \subseteq 
 F_1 \subsetneq 
  F_2 \subsetneq \cdots
  \subsetneq F_\ell \quad (\subsetneq E).
$
\end{definition}

\subsection{Purity, gallery-connectedness, shellability}
\label{pure-gallery-connected-shellable-section}

We recall a hierarchy of simplicial complex properties.
 
\begin{definition}
A {\it facet} in a simplicial complex $\Delta$ 
is a face which is maximal under inclusion. 

One says that $\Delta$ is {\it pure} and $d$-dimensional if
all of its {\it facets} have
the same cardinality $d+1$.
\end{definition}

\begin{definition}
A pure $d$-dimensional simplicial complex $\Delta$ is {\it gallery-connected}\footnote{Also known as {\it connected in codimension one} or {\it strongly connected} or {\it dually connected}.},
if for any two facets $\phi,\phi'$ one has
a sequence of facets $\phi=\phi_0,\phi_1,\ldots,\phi_t=\phi'$
with $\dim(\phi_i \cap \phi_{i-1})=d-1$ for each $i=1,2,\ldots,t$.
\end{definition}

\begin{definition}
\label{shelling-definition}
A pure $d$-dimensional simplicial complex $\Delta$ is {\it shellable}\footnote{Here we restrict our shellable complexes to be pure; see Bj\"orner and Wachs \cite{BjornerWachs}
for the generalization to nonpure complexes.} if one can order its
facets $\phi_1,\phi_2,\phi_3,\ldots$ in a {\it shelling order}: for all $j \geq 2$, the intersection 
of the subcomplex generated by $\phi_j$ and
the subcomplex generated by all previous facets
$\{ \phi_1,\ldots,\phi_{j-1} \}$
is a pure subcomplex of dimension $d-1$ inside $\phi_j$.
Here is a useful equivalent way to say that a total
ordering $\prec$ on the facets of $\Delta$  
is a shelling order:
$$
\text{for all facets }\phi \prec \phi',\text{ there exists }\phi'' \prec \phi'\text{ such that }
\phi \cap \phi' \subseteq \phi'' \cap \phi'
\text{ and }
\#\phi'' \cap \phi'=\#\phi'-1.
$$
\end{definition}

\medskip
Shellability of $\Delta$ determines
the homotopy type of its {\it geometric realization} $\Vert \Delta \Vert$;  see Kozlov \cite[Chap. 12]{Kozlov}.

\begin{definition}
    Let $\Delta$ be a shellable simplicial complex with shelling order $\phi_1,\phi_2,\phi_3,\ldots$ on its facets. The \textit{restriction face} of facet $\phi_j$ is its subface $\mathscr{R}(\phi_j)$ containing these vertices:
    \[
        \mathscr{R}(\phi_j)=\{x\in \phi_j : \text{ there exists }i\text{ with }1 \leq i < j\text{ and } \phi_j\setminus\{x\}\subset \phi_i\}.
    \]
Call $\phi_j$ a {\it homology facet} in the shelling if $\mathscr{R}(\phi_j)=\phi_j$.
\end{definition}

\begin{lemma}
\label{shellable-homotopy} \cite[Thm. 12.3]{Kozlov}
If a pure $d$-dimensional shellable complex $\Delta$,
has a shelling with exactly $\beta$ homology facets, then its geometric realization is homotopy equivalent to a $\beta$-fold wedge of $d$-spheres. 
\end{lemma}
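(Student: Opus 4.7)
The plan is to induct on the shelling order, building up the geometric realization one facet at a time. Let $\Delta_j$ denote the subcomplex of $\Delta$ generated by the first $j$ facets $\phi_1,\phi_2,\ldots,\phi_j$ in the shelling, and let $\beta_j$ count the homology facets among these. The base case $\Delta_1 = \phi_1$ is a closed $d$-simplex, hence contractible, and $\beta_1 = 0$ because $\mathscr{R}(\phi_1) = \varnothing$. Inductively I would prove that $\Vert \Delta_j \Vert$ is homotopy equivalent to a $\beta_j$-fold wedge of $d$-spheres; taking $j$ equal to the total number of facets then yields the lemma.

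The key step is analyzing how $\phi_j$ attaches to $\Delta_{j-1}$. Unpacking the definition of the restriction face, one checks that $\phi_j \cap \Delta_{j-1}$ is precisely the subcomplex of $\partial \phi_j$ generated by the codimension-one faces $\phi_j \setminus \{x\}$ with $x \in \mathscr{R}(\phi_j)$. When $\mathscr{R}(\phi_j) \subsetneq \phi_j$, any vertex $v \in \phi_j \setminus \mathscr{R}(\phi_j)$ acts as a cone vertex for this intersection: every generator $\phi_j \setminus \{x\}$ contains $v$, and more generally $\sigma \cup \{v\}$ lies in the intersection whenever $\sigma$ does. So the intersection is contractible, and attaching the disk $\phi_j$ to $\Delta_{j-1}$ along a contractible subcomplex preserves homotopy type, because the inclusion of a contractible subcomplex into the contractible simplex $\phi_j$ is a cofibration and a homotopy equivalence, which pushes out to a homotopy equivalence $\Vert \Delta_{j-1} \Vert \simeq \Vert \Delta_j \Vert$. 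When instead $\mathscr{R}(\phi_j) = \phi_j$, the intersection is all of $\partial \phi_j \cong S^{d-1}$; assuming $d \geq 2$, the inductive hypothesis gives $\Vert \Delta_{j-1} \Vert$ as a wedge of $d$-spheres with vanishing $\pi_{d-1}$, so the attaching map is null-homotopic and $\Vert \Delta_j \Vert \simeq \Vert \Delta_{j-1} \Vert \vee S^d$, incrementing $\beta$ correctly.

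The main obstacle is the low-dimensional cases $d \leq 1$, where $\pi_{d-1}$ does not automatically vanish and the null-homotopy step fails as stated. For $d = 0$ the complex is a finite discrete set of vertices, and a direct enumeration matches the homology facet count to the number of wedge summands of $S^0$. For $d = 1$ we are successively attaching edges to a graph; the shelling condition forces every new edge $\phi_j$ to share at least one endpoint with $\Delta_{j-1}$, so $\Vert \Delta_{j-1} \Vert$ remains path-connected, and then any map $S^0 \to \Vert \Delta_{j-1} \Vert$ is homotopic to a constant map, so attaching a $1$-cell along it yields a wedge with $S^1$. These ad hoc fixes in the low-dimensional cases, combined with the main inductive argument above, complete the proof.
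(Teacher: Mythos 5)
Your proposal is correct, and it is essentially the standard textbook proof of the cited result: the paper itself does not prove this lemma but defers to Kozlov, and your facet-by-facet induction is the argument behind that citation. Two small points worth making explicit: the identification of $\phi_j \cap \Delta_{j-1}$ with the subcomplex of $\partial \phi_j$ generated by $\{\phi_j\setminus\{x\} : x \in \mathscr{R}(\phi_j)\}$ uses the purity requirement in the shelling definition (not just the definition of $\mathscr{R}$), and that same requirement forces this intersection to be nonempty for $j \geq 2$, which is needed for the cone-point/gluing step to apply. Where your route differs from the paper is instructive: the paper's own machinery for statements of this type is Lemma~\ref{homology-facets-lemma}, which argues globally rather than inductively --- remove the homology facets all at once, observe that the remaining subcomplex is contractible, and then use the Contractible Subcomplex Lemma together with the long exact sequence of the pair $(\Delta,\Delta')$ to read off both the homotopy type and the action on top homology. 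Your induction along the shelling order gives the wedge-of-spheres conclusion for shellable complexes directly (handling $d\le 1$ by connectivity, as you do), whereas the paper's quotient/pair approach buys the more general statement for an arbitrary collection of homology facets, including the equivariant identification of the homology representation that the paper needs later; in exchange it must assume contractibility of the complement rather than derive it step by step.
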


\noindent
Intuitively, each homology facet in the shelling
``caps off" a $d$-sphere. Furthermore, the
subcomplex obtained by removing all homology facets is contractible, as it is a shellable complex with no homology
facets.

\begin{example}
\label{known-shellings-example}
As mentioned in the Introduction,
$\indepsets{M}$ and $\Berg{M}$ are shellable,
and
their shellings have $T_M(0,1), T_M(1,0)$ homology facets, respectively \cite[\S 7.3, 7.4, 7.6]{Bjorner-chapter},
where $T_M(x,y)$ is the {\it Tutte polynomial}.
\end{example}

\section{Proof of Theorem~\ref{shelling-theorem}}
\label{shellability-section}
We recall here the statement of the theorem.
\vskip.1in
\noindent
{\bf Theorem~\ref{shelling-theorem}.}
{\it
The augmented Bergman complex $\AugBerg{M}$ of a matroid is shellable,
via two families of shellings:
\begin{itemize}
    \item [(i)] one family that shells the facets of $\Cone(\Berg{M})$
    first, and the facets of $\indepsets{M}$
     last, 
       \item [(ii)] one family that shells the facets of $\indepsets{M}$ first, and the facets of $\Cone(\Berg{M})$  last.
\end{itemize}
}

\medskip
\noindent
Before proving it, we identify and conveniently index
facets of $\AugBerg{M}$. Recall from \eqref{typical-augberg-simplex} that faces
of $\AugBerg{M}$ are  
$$
\phi=\{y_i\}_{i \in I} \cup \{x_{F_j}\}_{j=1}^\ell
$$
where $I \in \indepsets{M}$, each $F_j \in \flats{M} \setminus \{E\}$, and
$I \subseteq F_1 \subsetneq \cdots \subsetneq F_\ell$.
This face $\phi$ is a facet if and only if both 
\begin{itemize}
    \item 
$\overline{I}=F_1$ \quad (else one could add the vertex $x_{\overline{I}}$ to $\phi$), and
\item $\#I + \ell=r(M)$ \quad  (else  $\overline{I}=F_1 \subsetneq \cdots \subsetneq F_\ell \subsetneq E$ is a non-maximal chain in interval
$[F_1,E]$ of $\flats{M}$).
\end{itemize}
In comparing facets $\phi,\phi'$, with $\phi$ as above, and 
$
\phi'=\{y_i\}_{i \in I'} \cup \{x_{F'_j}\}_{j=1}^{\ell'},
$
we will use this abbreviated notation: 
letting $F_\subdot$ denote the chain of flats $F_1 \subsetneq \cdots \subsetneq F_\ell$, and similarly for $F'_\subdot$, write
\begin{equation}
\label{typical-pair-of-facets}
\begin{aligned}
\phi &\leftrightarrow (I,F_\subdot),\\
\phi'&\leftrightarrow (I',F'_\subdot).
\end{aligned}
\end{equation}
In our proofs that various linear orders  $\prec$ on the facets of $\AugBerg{M}$ are shellings, as in Definition~\ref{shelling-definition},
we will be given such a pair $\phi, \phi'$
as in \eqref{typical-pair-of-facets} with $\phi \prec \phi'$,
and need to {\bf construct}
\begin{equation}
\label{shelling-condition}
\phi'' \leftrightarrow (I",F''_\subdot)
\text{ for which }
\phi \cap \phi' \subseteq \phi'' \cap \phi'
\text{ and }
\#\phi'' \cap \phi'=\#\phi'-1.
\end{equation}

\subsection{Flag-to-basis shellings}

\begin{definition}
\label{flag-to-basis-order-definition}
Call a total order $\prec$ on the facets $\phi$ of
$\AugBerg{M}$ a {\it flag-to-basis ordering}
if two facets as in \eqref{typical-pair-of-facets}
have $\phi \prec \phi'$
whenever {\it either} of these conditions hold:
\begin{itemize}
    \item[(a)] $\#I < \#I'$, or 
    \item[(b)] $I=I'$, so  $F_1=\overline{I}=\overline{I}'=F'_1$, and
    $F_\subdot$ strictly precedes $F'_\subdot$ in some chosen
    shelling of $\Berg{M/F_1}$.
\end{itemize}
Note that in condition (b), we are identifying the flats
$\flats{M/F_1}$ with the poset interval $[F_1,E]$
in $\flats{M}$.
\end{definition}

\begin{proof}[Proof of Theorem~\ref{shelling-theorem}(i).]
We check that any flag-to-basis ordering $\prec$ on
the facets of $\AugBerg{M}$ gives a shelling. 
Note Definition~\ref{flag-to-basis-order-definition}(a) ensures that $\prec$ orders facets of $\Cone(\Berg{M})$ first
and those of $\indepsets{M}$ last. To check it is a shelling, given facets $\phi \prec \phi'$ as in \eqref{typical-pair-of-facets}, there are two cases
to consider. 

\medskip
\noindent
{\sf Case 1}: $I=I'$. In this case, the shelling of $\AugBerg{M/F_1}$ from Definition~\ref{flag-to-basis-order-definition}(b)
provides the existence of a maximal chain $F''_\subdot$ shelled earlier than $F'_\subdot$
in $\flats{M/F_1}$, and having $F_\subdot \cap F'_\subdot \subseteq F''_\subdot \cap F'_\subdot$
and $\#F''_\subdot \cap F'_\subdot=\#F'_\subdot-1$.
Thus taking $\phi'' \leftrightarrow (I,F''_\subdot)$ does the job
for \eqref{shelling-condition}.

\medskip
\noindent
{\sf Case 2}: $I\neq I'$. Consider the independent set
$I \cap I' \subsetneq I'$, and use Axiom (I3) repeatedly
to find $I'' \in \indepsets{M}$ having
$
I \cap I' \subseteq I'' \subsetneq I'
$
with $\#I''=\#I'-1$. Now let $F''_\subdot:=\{\overline{I''}\} \cup F'_\subdot$.
One can then check that $\phi'' \leftrightarrow (I'',F_\subdot'')$ has $\phi'' \prec \phi'$ (since $\#I'' < \#I')$, and does the job for \eqref{shelling-condition}.
\smallskip
\end{proof}

\subsection{Basis-to-flag shellings}

\begin{definition}
\label{basis-to-flag-order-definition}
Call a total order $\prec$ on the facets $\phi$ of
$\AugBerg{M}$ a {\it basis-to-flag ordering}
if two facets as in \eqref{typical-pair-of-facets}
have $\phi \prec \phi'$
whenever any of these conditions (a),(b), or (c) hold:
\begin{itemize}
    \item[(a)] $\#I > \#I'$, or 
    \item[(b)] $F_1=F_1'$ and
    $F_\subdot$ strictly precedes $F'_\subdot$ in some chosen shelling of $\Berg{M/F_1}$, or
    \item[(c)] $F_\subdot=F'_\subdot$ (so that $\overline{I}=F_1=F_1'=\overline{I'}$), and $I$ strictly precedes $I'$ in some chosen shelling of $\indepsets{M|_F}$. 
\end{itemize}
As before, in condition (b), we 
identify $\flats{M/F_1}$ with the interval $[F_1,E]$
in $\flats{M}$, but now in condition (c), we also
identify $\indepsets{M|_F}$ with $\indepsets{M} \cap 2^{F_1}$.
\end{definition}

\begin{proof}[Proof of Theorem~\ref{shelling-theorem}(ii).]
We check that any basis-to-flag ordering $\prec$ on
the facets of $\AugBerg{M}$ gives a shelling. 
Note Definition~\ref{basis-to-flag-order-definition}(a) ensures that $\prec$ orders facets of $\indepsets{M}$ first
and those of $\Cone(\Berg{M})$ last. To check it is a shelling, given facets $\phi \prec \phi'$ as in \eqref{typical-pair-of-facets}, there are three cases to consider. 

\medskip
\noindent
{\sf Case 1}: $F_1=F_1'$, but
$F_\subdot \neq F'_\subdot$.

Then Definition~\ref{basis-to-flag-order-definition}(c) ensures that
$F_\subdot$ strictly precedes $F'_\subdot$ in our chosen shelling of $\Berg{M/F_1}$. As before, this shelling
of $\Berg{M/F_1}$ provides the existence of a maximal chain $F''_\subdot$
in $\flats{M/F_1}$ having $F_\subdot \cap F'_\subdot \subseteq F''_\subdot \cap F'_\subdot$
and $\#F''_\subdot \cap F'_\subdot=\#F'_\subdot-1$.
Taking $\phi'' \leftrightarrow (I,F''_\subdot)$ does the job
for \eqref{shelling-condition}.

\medskip
\noindent
{\sf Case 2}: $F_\subdot= F'_\subdot$.

Here the shelling in 
Definition~\ref{basis-to-flag-order-definition}(c)
provides the existence of an independent set $I''$ with $\overline{I''} = F_1$ having $I \cap I' \subseteq I'' \cap I'$
and $\#I'' \cap I'=\#I'-1$.
Thus taking $\phi'' \leftrightarrow (I'',F_\subdot)$ does the job for
\eqref{shelling-condition}.

\medskip
\noindent
{\sf Case 3}: $F_1\neq F_1'$.

In this case, choose any element $i_0 \in F_2' \setminus F_1'$, and then
define $I'':=I' \cup \{i_0\}$ and $F''_\subdot:=F'_\subdot \setminus \{F_1'\}$. 
One can then check that $\phi'' \leftrightarrow (I'',F_\subdot'')$ has $\phi'' \prec \phi'$ (since $\#I'' > \#I'$), and does the job for \eqref{shelling-condition}.
\end{proof}

\section{Proof of Corollary~\ref{homotopy-type-corollary}}
\label{homotopy-type-section}

We recall here the statement of the corollary.

\vskip.1in
\noindent
{\bf Corollary~\ref{homotopy-type-corollary}.}
{\it
For a matroid $M$, the augmented Bergman complex $\AugBerg{M}$
is homotopy equivalent to a $\beta$-fold wedge of
$(r(M)-1)$-spheres, with two different expressions for $\beta$:
\begin{align*}
\beta&=\#\bases{M}\quad (=T_M(1,1))\\
\beta&=\sum_{F \in \flats{M}} T_{M|_F}(0,1) \cdot T_{M/F}(1,0).
\end{align*}
The first and second expressions for $\beta$ above 
(labeled \eqref{flag-to-basis-expression-for-beta}, \eqref{basis-to-flag-expression-for-beta} in the Introduction)
are predicted by the shellings in 
Theorem~\ref{shelling-theorem}(i),(ii),
respectively.
}

\begin{proof}
Recall $\beta$ counts 
homology facets $\phi$, that is, those with $\mathscr{R}(\phi)=\phi$,
for the shellings in Theorem~\ref{shelling-theorem}(i),(ii).

\medskip
\noindent
{\sf Proof of \eqref{flag-to-basis-expression-for-beta}}.
Assume that $\prec$ is a flag-to-basis shelling order on the facets, as in Theorem~\ref{shelling-theorem}(i).
We will show that $\phi \leftrightarrow (I,F_\subdot)$ is a homology facet if and only if $F_\subdot=\varnothing$,
that is, $I$ is a basis. 

For the ``if" direction, assume $I=\{b_1,\ldots,b_r\}$ is a basis and $F_\subdot=\varnothing$, so $\phi=\{y_{b_1},\ldots,y_{b_r}\}$. Then 
every vertex $y_{b_i}$ lies in  $\mathscr{R}(\phi)$, since
$\phi\setminus \{y_{b_i}\} \subset \phi' \prec \phi$
where $\phi' \leftrightarrow (I',F_\subdot')$
with $I':=I \setminus \{b_i\}$ and $F'_\subdot:=\{\overline{I'}\}$.
The fact that $\phi' \prec \phi$
uses Definition~\ref{flag-to-basis-order-definition}(a).

For the ``only if" direction, assume that $F_\subdot =\{x_{F_1},\ldots,x_{F_\ell}\} \neq\varnothing$, and we will show that $\mathscr{R}(\phi) \neq \phi$ because $x_{F_1} \not\in \mathscr{R}(\phi)$.
To see this, note that any facet $\phi'\leftrightarrow (I',F'_\subdot)$ 
containing $\phi \setminus \{x_{F_1}\}$
must either have $\#I'=\#I+1$ (so $\phi' \succ \phi$),
or have $I'=I$ and hence  $F'_1=\overline{I'}=\overline{I}=F_1$,
which forces $\phi=\phi'.$

\medskip
\noindent
{\sf Proof of \eqref{basis-to-flag-expression-for-beta}}.

Assume that $\prec$ is a basis-to-flag shelling order on the facets, as in Theorem~\ref{shelling-theorem}(ii).
We will show that $\phi \leftrightarrow (I,F_\subdot)$ is a homology facet if and only if it satisfies the following conditions:
considering the flat $F=\overline{I}$ (possibly $F=E$ when $I$ is a basis; otherwise $F=F_1$) one has
both that
\begin{itemize}
    \item[(I)] $F_\subdot$ is a homology facet in the chosen shelling
for $\Berg{M/F}$, and 
\item[(II)] $I$ is a homology facet in the chosen shelling
for $\indepsets{M|_F}$. 
\end{itemize}
This would prove \eqref{basis-to-flag-expression-for-beta},
since then the homology facets for the order $\prec$ would
be parametrized as follows: first choose the flat $F$ in $\flats{M}$ arbitrarily, then choose $F_\subdot$ from one of $T_{M/F}(1,0)$ choices, and lastly choose $I$
independently from one of $T_{M|F}(0,1)$ choices; see
Example~\ref{known-shellings-example}

To check that conditions (I),(II) indeed describe the homology facets for the
$\prec$ shelling order, first deal with the special case
when $F:=\overline{I}=E$, so that $I$ is a basis and $F_\subdot=\varnothing$.
It was already noted that Definition~\ref{basis-to-flag-order-definition}(a) implies
$\prec$ shells the facets of
the subcomplex $\indepsets{M}$ first. Hence $I$ will be a homology facet for the $\prec$ shelling if and only if it is a homology facet for the chosen shelling of $\indepsets{M}=\indepsets{M|_E}$, as in condition (II). Since
$F_\subdot=\varnothing$, condition (I) 
above is vacuously satisfied in this case.

Now assume we are in the more 
generic case, when $F:=\overline{I}=F_1\neq E$. We need to understand whether or not a typical vertex $x$ of $\phi$ 
lies in $\mathscr{R}(\phi)$, 
that is, whether $\phi\setminus\{x\}$ lies in some earlier facet $\phi' \prec \phi$.
There are three cases to consider for $x$.

\medskip
\noindent
{\sf Case 1.} $x=x_{F_1}$.
In this case, one {\it always} has $x \in\mathscr{R}(\phi)$.
To see this, pick any $i_0 \in F_2 \setminus F_1$, and define
the facet
$\phi'\leftrightarrow (I',F_\subdot')$ where 
$I':=I \cup \{i_0\}$ and $F_\subdot':=F_\subdot \setminus \{F_1\}$. Then $\phi' \supseteq \phi \setminus \{x\}$
and $\phi' \prec \phi$ since $\#I' > \#I$.
    
\medskip
\noindent
{\sf Case 2.} $x=x_{F_j}$ for $j \geq 2$.
In this case, Definition~\ref{basis-to-flag-order-definition}(b)
shows that $x$ lies in $\mathscr{R}(\phi)$ if and only
if the vertex $x_{F_j \setminus F_1}$ lies in $\mathscr{R}(F_\subdot)$ in the chosen
shelling for $\Berg{M/F_1}$. 

\medskip
\noindent
{\sf Case 3.} $x=y_i$ for some $i \in I$.
In this case, Definition~\ref{basis-to-flag-order-definition}(c) shows that
$x$ lies in $\mathscr{R}(\phi)$ if and only if
$y_i$ lies in $\mathscr{R}(I)$ in the chosen
shelling for $\indepsets{M|_{F_1}}$.

\medskip
Hence conditions (I),(II) above characterize
homology facets for the $\prec$ shelling, completing the proof.
\end{proof}

\begin{remark}
Tutte's original definition of the Tutte polynomial $T_M(x,y)$
involved choosing a linear order $\omega$ on the ground set $E$. From this he defined for each basis
$B$ in $\bases{M}$ its {\it internal activity} $i_\omega(B)$ and {\it external activity} $e_\omega(B)$
with respect to $\omega$, and then one has \cite[\S 7.3,eqn (7.11)]{Bjorner-chapter}
$$
T_M(x,y) = \sum_{B \in \bases{M}} x^{i_\omega(B)} y^{e_\omega(B)}.
$$
In particular, $T_M(0,1)$ and $T_M(1,0)$
count the bases with internal and external activity zero, respectively. One can choose shelling orders for $\indepsets{M}$
and $\Berg{M}$ having homology facets indexed by
such bases; see Bj\"orner \cite[\S 7.3, 7.6]{Bjorner-chapter}. Consequently, one can choose the basis-to-flag shellings in Theorem~\ref{shelling-theorem}(ii)
so that their homology facets 
are indexed by triples $(F,I,I')$ that
combinatorially interpret the right side of
\eqref{basis-to-flag-expression-for-beta}:
\begin{itemize}
    \item  $F$ is a flat,
    \item $I$ a basis for $M|_F$ with
internal activity zero, and 
\item $I'$ a basis of
$M/F$ with external activity zero.
\end{itemize}
Bijections between the set $\bases{M}$ and the set of triples $\{ (F,I,I') \}$
as above appear in \cite{EtienneLasVergnas, KookReinerStanton}.

\end{remark}

\section{Proof of Corollary~\ref{homology-rep-theorem}}
\label{equivariant-section}

We recall the statement of the corollary.

\vskip.1in
\noindent
{\bf Corollary~\ref{homology-rep-theorem}.}
{\it
The action of $\Aut(M)$ on the top (reduced)
homology $\tilde{H}_{r(M)-1}(\AugBerg{M},\Z)$
is the same as its action on the top
oriented simplicial chain group $\tilde{C}_{r(M)-1}(\indepsets{M},\Z)$ for the
complex $\indepsets{M}$.
}

\medskip
\noindent
Recall that one can compute (reduced) simplicial homology $\tilde{H}_*(\Delta,\Z)$
for a simplicial complex
$\Delta$ using {\it oriented simplicial chains}; see, e.g., Munkres \cite[\S1.5]{Munkres}. The
$i^{th}$ chain group $\tilde{C}_d(\Delta,\Z)$ has the following description. Fix for each $i$-dimensional simplex $\sigma$ having vertex set $\{v_0,v_1,\ldots,v_i\}$
a {\it reference ordering} $(v_0,v_1,\ldots,v_i)$, and then $\tilde{C}_i(\Delta,\Z)$ is
a free abelian group having one $\Z$-basis element
$[v_0,v_1,\ldots,v_i]$, called an {\it oriented simplex}, for each such $\sigma$, and for any permutation $w$
in the symmetric group $\mathfrak{S}_{i+1}$, one sets
$$
[v_{w(0)},v_{w(1)},\ldots,v_{w(i)}] :=
\sgn(w) \cdot [v_0,v_1,\ldots,v_i]
$$
where $\sgn(w) \in \{+1,-1\}$ is the usual {\it sign} of the permutation $w$.

\medskip
\noindent
We claim that Corollary~\ref{homology-rep-theorem} will
be another consequence
of the flat-to-basis shellings of $\AugBerg{M}$ from
Theorem~\ref{shelling-theorem}(i),
similar to equation
\eqref{flag-to-basis-expression-for-beta}. The essential point is that matroid automorphisms permute the bases $\bases{M}$,
which index the homology facets
for these shellings. In fact, we will deduce Corollary~\ref{homology-rep-theorem} from a lemma that applies to a slightly more general notion of homology facets.

\begin{definition}
In a simplicial complex $\Delta$, call a collection of its facets $\cB$ a set of
{\it homology facets} if the subcomplex $\Delta \setminus \cB$ obtained by removing them is contractible.
\end{definition}

This leads to the following generalization of
Lemma~\ref{shellable-homotopy}.

\begin{lemma}
\label{homology-facets-lemma}
When $\Delta$ has a collection $\cB$ of homology facets, it is homotopy equivalent to a wedge of spheres: 
$$
\Delta \approx\bigvee_{\sigma \in \cB} \mathbb{S}^{\dim(\sigma)}.
$$
Furthermore, any group $G$ of simplicial automorphisms of $\Delta$ preserving
$\cB$ setwise will act on $\tilde{H}_i(\Delta,\Z)$
via its signed permutation representation on the $\Z$-submodule $\mathrm{span}_\Z\{ [\sigma]: \sigma \in \cB, \dim(\sigma)=i\}.$ within $\tilde{C}_i(\Delta,\Z)$.
\end{lemma}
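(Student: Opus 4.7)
The plan is to exploit the fact that in a CW complex, collapsing a contractible subcomplex gives a homotopy equivalence. Since $\Delta \setminus \cB$ is contractible by hypothesis, one obtains $\Delta \simeq \Delta/(\Delta \setminus \cB)$. Each facet $\sigma \in \cB$ has its entire boundary contained in $\Delta \setminus \cB$, since $\sigma$ is inclusion-maximal, so its proper faces cannot themselves lie in $\cB$. Hence in the quotient $\Delta/(\Delta \setminus \cB)$ each closed simplex $\overline{\sigma}$ becomes a $\dim(\sigma)$-sphere after collapsing its boundary, with all of these spheres sharing the single basepoint obtained from $\Delta \setminus \cB$. This identifies the quotient with $\bigvee_{\sigma \in \cB}\mathbb{S}^{\dim(\sigma)}$, proving the homotopy equivalence asserted in the first half of the lemma.

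For the second assertion, I would pass to the long exact sequence of the pair $(\Delta, \Delta \setminus \cB)$. Contractibility of $\Delta \setminus \cB$ collapses the adjacent terms and yields a natural isomorphism $\tilde{H}_i(\Delta,\Z) \cong H_i(\Delta, \Delta \setminus \cB; \Z)$. The relative simplicial chain group $C_i(\Delta, \Delta \setminus \cB; \Z)$ is the free abelian group on those $i$-dimensional simplices of $\Delta$ not contained in $\Delta \setminus \cB$, which are precisely the members of $\cB$ of dimension $i$. Because every $\sigma \in \cB$ is a facet, its simplicial boundary $\partial \sigma$ already lies in $C_{i-1}(\Delta \setminus \cB; \Z)$, so all relative boundary maps vanish and
$$
H_i(\Delta, \Delta \setminus \cB; \Z) \;=\; C_i(\Delta, \Delta \setminus \cB; \Z) \;=\; \mathrm{span}_\Z\{\,[\sigma] : \sigma \in \cB,\ \dim(\sigma)=i\,\}.
$$

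Since $G$ acts simplicially and preserves $\cB$ setwise, it preserves the subcomplex $\Delta \setminus \cB$ and hence the pair, so the isomorphism above is $G$-equivariant. The ambient $G$-action on $\tilde{C}_i(\Delta,\Z)$ is by definition the signed permutation action $g \cdot [v_0,\ldots,v_i] = [g(v_0),\ldots,g(v_i)]$ on oriented simplices, and this action descends to the quotient $C_i(\Delta,\Delta \setminus \cB;\Z)$, giving exactly the claimed signed permutation representation on the stated $\Z$-submodule. The only point that needs a citation is the standard fact that $\Delta/(\Delta \setminus \cB) \simeq \Delta$ for a contractible subcomplex of a CW complex (the pair is a good pair, see e.g.\ Hatcher, Prop.~0.17); beyond this, no substantive obstacle arises, and the argument is essentially bookkeeping between the oriented-chain description of $\tilde{H}_i$ and the quotient chain complex.
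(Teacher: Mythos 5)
Your proposal is correct and follows essentially the same route as the paper's proof: the homotopy equivalence comes from collapsing the contractible subcomplex $\Delta \setminus \cB$ (you cite Hatcher's good-pair result where the paper cites Bj\"orner's Contractible Subcomplex Lemma), and the equivariant homology claim comes from the long exact sequence of the pair $(\Delta, \Delta\setminus\cB)$ together with the observation that the relative boundary maps vanish because the elements of $\cB$ are facets. No gaps; the bookkeeping identifying the relative chain group with $\mathrm{span}_\Z\{[\sigma]:\sigma\in\cB,\ \dim(\sigma)=i\}$ as a $G$-module is exactly what the paper does.
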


\begin{remark}
\label{signed-permutation-rep-remark}
More explicitly, if $g(\sigma)=\sigma'$, and
$\sigma, \sigma'$ with $[\sigma]=[v_0,v_1,\ldots,v_i]$
and $[\sigma']=[v_0',v_1',\ldots,v_i']$,
then 
$
g[\sigma]=\pm [\sigma']
$
where the $\pm$ is $\sgn(w)$ for $w$ defined by $(g(v_0),\ldots,g(v_i))=(v_{w(0)}',\ldots,v_{w(i)}')$. One can also view this signed permutation representation 
as a direct sum
$
\bigoplus_{\sigma} \sgn_\sigma \uparrow_{G_\sigma}^G
$
of {\it induced representations}. Here $\sigma$ runs through
any choice of $G$-orbit representatives for $\cB$,
and $G_\sigma$ is the subgroup of $G$  setwise stabilizing the vertex set 
$\{v_0,v_1,\ldots,v_d\}$ of $\sigma$,
with $\sgn_\sigma: G_\sigma \rightarrow \{+1,-1\}$ its sign character.
\end{remark}

\begin{proof}[Proof of Lemma~\ref{homology-facets-lemma}]
(cf. proof of \cite[Theorem 7.7.2]{Bjorner-chapter})
Name $\Delta':=\Delta \setminus \cB$.
Then the homotopy type assertion is a
consequence of what Bj\"orner calls the {\it Contractible Subcomplex Lemma} \cite[Lemma 10.2]{Bjorner-handbook}:  for
a contractible subcomplex 
$\Delta' \subset \Delta$, the
projection $\Vert \Delta \Vert \twoheadrightarrow \Vert \Delta \Vert /\Vert \Delta'\Vert$ is a homotopy equivalence.

For the homology assertion, start with 
the long exact sequence in integral homology for the pair $(\Delta,\Delta')$,
$$
\cdots \rightarrow 
\tilde{H}_i(\Delta') \rightarrow  
\tilde{H}_i(\Delta) \rightarrow 
\tilde{H}_i(\Delta,\Delta') \rightarrow 
\tilde{H}_{i-1}(\Delta') \rightarrow \cdots
$$
Contractibility of $\Delta'$ implies $\tilde{H}_i(\Delta')=0$ for all $i$, giving isomorphisms 
$
\tilde{H}_i(\Delta) \cong 
\tilde{H}_i(\Delta,\Delta').
$
On the other hand, since each simplex in $\cB=\Delta \setminus \Delta'$ 
is a {\it facet} of $\Delta$, lying in
no higher-dimensional faces, the boundary maps in the complex $\tilde{C}_*(\Delta,\Delta')$
computing  $\tilde{H}_*(\Delta,\Delta')$ are all zero. Hence 
$\tilde{H}_i(\Delta,\Delta') = \tilde{C}_i(\Delta,\Delta')$ for all $i$. Furthermore, our assumptions on $G$
imply that all of these isomorphisms commute with the $G$-action. Lastly, note  
 $\tilde{C}_i(\Delta,\Delta)$ has the same
 $\Z$-basis and $G$-action as  $\mathrm{span}_\Z\{ [\sigma] : \sigma \in \cB, \dim(\sigma)=i\}$ within $\tilde{C}_i(\Delta)$.
\end{proof}

\begin{proof}[Proof of Corollary~\ref{homology-rep-theorem}.]

Apply Lemma~\ref{homology-facets-lemma} to
the flat-to-basis shellings from Theorem~\ref{shelling-theorem}(i) of $\AugBerg{M}$. The homology facets are indexed
by the bases $\bases{M}$, and preserved
by the group $G=\Aut(M)$, and all have
dimension $r(M)-1$. Furthermore, note that
within $\tilde{C}_{r(M)-1}(\AugBerg{M},\Z)$,
one has
$$
\mathrm{span}_\Z\{[\sigma]: \sigma \in \bases{M}\}
=\tilde{C}_{r(M)-1}(\indepsets{M},\Z)
$$
since the facets of $\AugBerg{M}$ indexed
by bases of $M$ happen to be exactly the 
facets of $\indepsets{M}$.
\end{proof}

\begin{remark}
Corollary~\ref{homology-rep-theorem} is closely related to 
an identity of representations
from work of Kook, Reiner and Stanton
\cite{KookReinerStanton}
on eigenspaces of {\it combinatorial Laplacians}
for $\indepsets{M}$. Specifically,
taking $i=r(M)-1$ in 
their \cite[Thm. 19]{KookReinerStanton}, 
asserts the following
isomorphism of $G$-representations
for $G=\Aut(M)$:
\begin{equation}
    \label{KRS-eigenspace-rep-decomposition}
\tilde{C}_{r(M)-1}(\indepsets{M})
\cong \bigoplus_{F \in \flats{M}}
\left[ 
\tilde{H}_{r(F)-1}(\indepsets{M|_F} )
\,\, \otimes \,\,
\tilde{H}^{r(M/F)-2}(\Berg{M/F} )
\right] \uparrow_{G_F}^G
\end{equation}
where $G_F=\{g \in G: g(F)=F\}$,
and $\left[-\right]\uparrow_{G_F}^G$ denotes
induction of representations from $G_F$ to $G$.

As we have seen, the flat-to-basis shelling in
Theorem~\ref{shelling-theorem}(i) led to Corollary~\ref{homology-rep-theorem},
showing the $G$-action on the left side of 
\eqref{KRS-eigenspace-rep-decomposition}
is the same as the one on $\tilde{H}_{r(M)-1}(\AugBerg{M})$.
Similarly, with a bit more work 
(details omitted here),
one can use the basis-to-flat shelling in
Theorem~\ref{shelling-theorem}(ii),
and its resulting bases for
$\tilde{H}_{r(M)-1}(\AugBerg{M})$ as
in Bj\"orner \cite[Thm. 7.7.2]{Bjorner-chapter}
to show that the $G$-action on 
$\tilde{H}_{r(M)-1}(\AugBerg{M})$
is isomorphic to the direct sum on the right
side of \eqref{KRS-eigenspace-rep-decomposition}.
\end{remark}

\begin{remark}
The aforementioned work \cite{KookReinerStanton} showed a remarkable property for $\indepsets{M}$ and its  simplicial boundary maps $\{ \partial_i \}_{i=1,2,\ldots}$:
their associated {\it combinatorial Laplacian matrices} $\{ \partial_i^T \partial_i \}$ have only {\it integer} eigenvalues. One might therefore ask
whether $\AugBerg{M}$ shares this property.
Sadly, this fails already for the Boolean matroid $M$ of rank $2$, where $\AugBerg{M}$ is the $5$-cycle graph shown in Figure~\ref{stellohedra-figure}. One can check that its Laplacian matrix $\partial_1^T \partial_1$ has characteristic polynomial $x(x^2-5x+5)^2$, whose eigenvalues are not all integers.

\end{remark}

\section{Augmented Bergman complexes for other closures}
\label{other-closures-section}

One can characterize a matroid $M$ on ground set $E$ in terms of its {\it matroid closure operator} $A \longmapsto \overline{A}$ 
defined in \eqref{matroid-closure}. This is an instance of the following more general notion.

\begin{definition}
Given a set $E$, a map $2^E \overset{f}{\longrightarrow} 2^E$
is called a {\it closure operator} on $E$ if it satisfies three axioms: for all subsets $A, B \subseteq E$,
\begin{itemize}
    \item[{\sf C1.}] $A \subseteq f(A)$
    \item[{\sf C2.}] $A \subseteq B$ implies $f(A) \subseteq f(B)$
    \item[{\sf C3.}] $f(f(A))=f(A)$
\end{itemize}
\end{definition}

\noindent
Any closure operator on a finite set $E$ has analogues of the complexes $\indepsets{M}, \Berg{M}, \AugBerg{M}$, introduced next.

\begin{definition}
Given a closure operator $f$ on a finite set $E$, define a subset
$I \subseteq E$ to be {\it independent} if 
$$
f(I \setminus \{i\}) \subsetneq f(I) \text{ for all }i \in I.
$$
Let $\indepsets{f}$ denote the collection of all independent subsets $I \subseteq E$.
It is not hard to check that $\indepsets{f}$ always
satisfies axioms ${\sf I1, I2}$ from Definition~\ref{independent-set-axioms-definition},
so that it defines a simplicial complex,
also denoted $\indepsets{f}$.
\end{definition}

\begin{definition}
Given a closure operator $f$ on a finite set $E$, define its
{\it poset of closed sets} 
$$
\flats{f}:=\{ F \subseteq E: f(F)=F\}
$$
partially ordered via inclusion.
It is not hard to check that $\flats{f}$ always
satisfies axioms ${\sf F1, F2}$ from Definition~\ref{flat-axioms-definition},
so that it becomes a lattice.
Define the {\it Bergman complex} 
$$
\Berg{f}:=\Delta(\,\, 
\flats{f} \setminus\{f(\varnothing),E\}
\,\,)
$$
to be the order complex of the proper part of this lattice $\flats{f}$.
\end{definition}

\begin{definition}
Given a closure operator $f$ on a finite set $E$, define
its {\it augmented Bergman complex} $\AugBerg{f}$ to
be the abstract simplicial complex on
vertex set 
$$
\{y_i: i \in E \} \sqcup \{x_F:  F \in \flats{f} \setminus \{E\} \}
$$
whose simplices are the subsets
\begin{equation}
\label{typical-closure-augberg-simplex}
\{y_i\}_{i \in I} 
\cup
\{x_{F_1},x_{F_2},\ldots,x_{F_\ell} \}
\end{equation}
for which $I \in \indepsets{f}$
and the $F_i$ are all closed sets in $\flats{f}$, satisfying
$
I \subseteq 
 F_1 \subsetneq 
  F_2 \subsetneq \cdots
  \subsetneq F_\ell \quad (\subsetneq E).
$
\end{definition}

As before with matroid closures, $\AugBerg{f}$ always
contains as subcomplexes both 
\begin{itemize}
    \item 
$\indepsets{f}$
as the simplices in
\eqref{typical-closure-augberg-simplex} with $\ell=0$, and
\item $\Cone(\Berg{f})=\Delta(\,\flats{F} \setminus \{E\}\,\,)$ having cone vertex $x_{f(\varnothing)}$, as
the simplices in \eqref{typical-closure-augberg-simplex} with $\#I = 0$.
\end{itemize}

\noindent
However, in contrast to matroid closures, the complexes $\indepsets{f}, \Berg{f}, \AugBerg{f}$ need not be pure, nor shellable.

\begin{example}
\label{closure-example}
Consider the closure operator $f: 2^E \rightarrow 2^E$ with $E=[5]=\{1,2,3,4,5\}$ whose
poset of closed sets $\flats{f}$ is depicted at top
left in Figure~\ref{closure-example-complexes-figure}. One can compute
the closure $f(A)$ as the intersection of all
$F \in \flats{M}$ containing $A$; for example, $f(\{5\})=\{4,5\}$ and $f(\{1,4\})=\{1,2,3,4,5\}$. The complexes $\indepsets{f}$ and $\Berg{f}$ are shown in the
top row, in the middle and at right. The second row depicts the
deletion $\AugBerg{f} \setminus \{x_\varnothing\}$
of the vertex $x_\varnothing$ from the augmented
Bergman complex $\AugBerg{f}$.

\begin{figure}[h] 

\begin{center}
\begin{minipage}{.2\textwidth}
\begin{tikzpicture}
  [scale=.33,auto=left,every node/.style={circle,fill=black!20}]
  \node (n12345) at (6,9) {12345};
  \node (n1) at (0,3)  {1};
  \node (n2) at (4,3)  {2};
  \node (n3) at (8,3)  {3};
   \node (n4) at (12,3)  {4};
     \node (n12) at (0,6)  {12};
  \node (n13) at (4,6)  {13};
  \node (n23) at (8,6)  {23};
   \node (n45) at (12,6)  {45};
  \node (n0) at (6,0) {$\varnothing$};
   \foreach \from/\to in {n12345/n12,n12345/n13,n12345/n23,n12345/n45, 
   n1/n12,n1/n13,n2/n12,n2/n23,n3/n13,n3/n23,n4/n45,
   n1/n0, n2/n0, n3/n0,n4/n0}
    \draw (\from) -- (\to);
\end{tikzpicture}
\end{minipage}
\qquad \qquad \qquad   
\begin{minipage}{.2\textwidth}
\begin{tikzpicture}
  [scale=.33,auto=left,every node/.style={circle,fill=black!20}]
  \draw[fill=gray!50](0.5,3) -- (7.5,3) -- (4,5.5)-- (0.5,3);
  \node (y1) at (0,3) {$y_1$};
  \node (y2) at (8,3) {$y_2$};
  \node (y3) at (4,6) {$y_3$};
  \node (y4) at (3,10) {$y_4$};
  \node (y5) at (3,0) {$y_5$};
\draw (y5) -- (3.5,3);
\draw[dashed](3.5,3) -- (y3);
  \foreach \from/\to in {y4/y1,y4/y2,y4/y3,y5/y1,y5/y2}
    \draw (\from) -- (\to);
\end{tikzpicture}
\end{minipage}
\qquad \qquad 
\begin{minipage}{.2\textwidth}
\begin{tikzpicture}
  [scale=.33,auto=left,every node/.style={circle,fill=black!20}]
  \node (x1) at (4,12) {$x_1$};
  \node (x2) at (8,4) {$x_2$};
  \node (x3) at (0,4) {$x_3$};
  \node (x12) at (8,8) {$x_{12}$};
  \node (x13) at (0,8) {$x_{13}$};
  \node (x23) at (4,0) {$x_{23}$};
  \node (x4) at (12,8) {$x_4$};
  \node (x45) at (12,4) {$x_{45}$};
  \foreach \from/\to in {x1/x12,x1/x13,x2/x12,x2/x23,x3/x13,x3/x23,x4/x45}
    \draw (\from) -- (\to);
\end{tikzpicture}
\end{minipage}
\end{center}

\medskip 

\begin{center}
\begin{minipage}{.2\textwidth}
 \hspace*{-0.6\linewidth}
\begin{tikzpicture}
  [scale=.30,auto=left,every node/.style={circle,fill=black!20}]
  \draw[fill=gray!50](8,8) -- (12,4) -- (4,4)-- (8,8);
   \draw[fill=gray!50](24,8) -- (24,4) -- (16,13)-- (24,8);
  \draw[fill=gray!50](8,12) -- (8,8) -- (16,8)-- (8,12);
  \draw[fill=gray!50](8,12) -- (8,8) -- (0,8)-- (8,12);
  \draw[fill=gray!50](4,4) -- (8,8) -- (0,8)-- (4,4);
  \draw[fill=gray!50](4,4) -- (0,8) -- (0,4)-- (4,4);
  \draw[fill=gray!50](4,4) -- (8,0) -- (0,4)-- (4,4);
  \draw[fill=gray!50](4,4) -- (8,0) -- (12,4)-- (4,4);
  \draw[fill=gray!50](16,4) -- (8,0) -- (12,4)-- (16,4);
  \draw[fill=gray!50](16,4) -- (16,8) -- (12,4)-- (16,4);      
  \draw[fill=gray!50](8,8) -- (16,8) -- (12,4)-- (8,8);
  \node (y1) at (8,8) {$y_1$};
  \node (y2) at (12,4) {$y_2$};
  \node (y3) at (4,4) {$y_3$};
  \node (y4) at (16,13) {$y_4$};
  \node (y5) at (16,-3) {$y_5$};
  \node (x1) at (8,12) {$x_1$};
  \node (x2) at (16,4) {$x_2$};
  \node (x3) at (0,4) {$x_3$};
  \node (x12) at (16,8) {$x_{12}$};
  \node (x13) at (0,8) {$x_{13}$};
  \node (x23) at (8,0) {$x_{23}$};
  \node (x4) at (24,8) {$x_4$};
  \node (x45) at (24,4) {$x_{45}$};
  \foreach \from/\to in {y4/y1,y4/y2,y4/y3,y5/y1,y5/y2,y5/y3,y5/x45}
    \draw (\from) -- (\to);
\end{tikzpicture}
\end{minipage}
\end{center}

\caption{The lattice $\flats{f}$, and the complexes
$\indepsets{f}$ and $\Berg{f}$, along with 
the complex $\AugBerg{f} \setminus \{x_\varnothing\}$ for the closure in Example~\ref{closure-example}.}

\label{closure-example-complexes-figure}
\end{figure}

\end{example}


It is not hard to show that any finite lattice
is isomorphic to $\flats{f}$ for some closure $f$,
and hence we cannot expect to say much about
the homotopy type of the Bergman complex $\Berg{f}$ in general; we expect
that the same holds for the independent set complex $\indepsets{f}$. 

Nevertheless, we claim that one still has the assertion of Corollary~\ref{homology-rep-theorem}
on the topology of the augmented Bergman complex $\AugBerg{f}$, after
appropriately defining {\it bases} and {\it automorphisms} for a closure.

\begin{definition}
For a closure $f$ on a finite set $E$, define the set $\bases{f}$ of {\it bases}
$$
\bases{f}:=\{ B \in \indepsets{f}: f(B)=E\}.
$$
\end{definition}
\begin{definition}
For a closure $f$ on $E$, define its {\it automorphism group} 
$$
\Aut(f):=\{\text{bijections }E \overset{g}{\longrightarrow} E: f(g(A))=g(f(A))
\text{ for all }A \subseteq E\}
$$
\end{definition}

\noindent
Note $\Aut(f)$ stabilizes
$\bases{f}$, and acts on
$\indepsets{f}, \Berg{f}, \AugBerg{f}$
via simplicial automorphisms.

\begin{thm}
\label{closure-augberg-homology-representation-theorem}
For any closure operator on a finite set $E$,
the bases $\bases{f}$ index a collection
of homology facets for 
$\AugBerg{f}$. 
Hence $\Delta$ is homotopy equivalent to a wedge of spheres
$
\bigvee_{B \in \bases{f}} \mathbb{S}^{\#B-1}.
$
and the action of
$\Aut(f)$ on
$\tilde{H}_i(\Delta,\Z)$ is the same
as on  $\mathrm{span}_\Z\{ [B]: B \in \bases{f}, \#B-1=i\}.$ within $\tilde{C}_i(\AugBerg{f},\Z)$.

\end{thm}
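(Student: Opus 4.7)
The plan is to apply Lemma~\ref{homology-facets-lemma} to the collection of basis simplices $\{y_i : i \in B\}$ for $B \in \bases{f}$. This requires two verifications: (a) each such simplex is indeed a facet of $\AugBerg{f}$, and (b) the subcomplex $K := \AugBerg{f} \setminus \bases{f}$ is contractible. Step (a) follows immediately from the closure axioms: extending $\{y_i : i \in B\}$ by $y_j$ would force $f(B) \subsetneq f(B \cup \{j\})$ via the independence condition, impossible because $f(B) = E$ is maximal; and extending by $x_F$ for a proper closed set $F$ would require $B \subseteq F$, whence $E = f(B) \subseteq f(F) = F$, a contradiction. Once (b) is established, Lemma~\ref{homology-facets-lemma} yields the wedge-of-spheres homotopy type and the description of the $\Aut(f)$-action, since $\Aut(f)$ preserves $\bases{f}$ setwise (both ``$B \in \indepsets{f}$'' and ``$f(B) = E$'' are $\Aut(f)$-stable conditions).

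For step (b), the strategy is to exhibit an acyclic discrete Morse matching on $K$ whose critical simplices are exactly those of the subcomplex $\Cone(\Berg{f}) = \Delta(\flats{f} \setminus \{E\})$, itself contractible as a cone with apex $x_{f(\varnothing)}$. For each simplex $\sigma = \{y_i\}_{i \in I} \cup \{x_{F_1}, \ldots, x_{F_\ell}\}$ of $K$ with $I \neq \varnothing$, the exclusion of basis facets forces $f(I) \neq E$, so $x_{f(I)}$ is a vertex of $\AugBerg{f}$; we pair
\[
    \sigma \;\longleftrightarrow\; \sigma \mathbin{\triangle} \{x_{f(I)}\}.
\]
From $I \subseteq F_1$ and monotonicity of $f$ one sees $f(I) \subseteq F_1$, so $\sigma \cup \{x_{f(I)}\}$ is a valid simplex of $\AugBerg{f}$, and it lies in $K$ because it contains an $x$-vertex and therefore cannot be a basis facet. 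Since $I(\sigma)$ is preserved by the pairing, this defines a well-posed involution on the simplices of $K$ with $I(\sigma) \neq \varnothing$, whose complement---the critical simplices---is exactly $\Cone(\Berg{f})$.

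The main obstacle will be verifying acyclicity of this matching. The key observation is that in any putative directed cycle in the modified Hasse diagram, $|I(\sigma)|$ can only remain constant (along matched arrows, or along unmatched covers that remove an $x$-vertex distinct from $x_{f(I)}$) or strictly decrease (along unmatched covers removing a $y$-vertex); so any cycle must live among simplices for a single fixed nonempty $I$. Those simplices biject with (possibly empty) chains in the interval $\flats{f} \cap [f(I), E)$, and the restricted matching simply toggles the minimum element $f(I)$ of this interval, which is easily seen to be acyclic by a strict-chain-length argument. Hence the full matching is acyclic, and the standard consequences of discrete Morse theory collapse $K$ onto the contractible subcomplex $\Cone(\Berg{f})$, completing step (b) and the proof.
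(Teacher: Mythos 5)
Your argument is correct, but it takes a genuinely different route from the paper's. Both proofs reduce, via Lemma~\ref{homology-facets-lemma}, to showing that $\Delta' = \AugBerg{f}\setminus\bases{f}$ is contractible (your step (a), that each $\{y_i\}_{i\in B}$ is a facet and that $\Aut(f)$ permutes them, matches the paper's implicit verification), and both exploit the same key combinatorial map sending the $y$-part $I$ of a face to the closed set $f(I)$. Where you diverge is in how contractibility is established: the paper introduces an auxiliary complex $\Delta''$ obtained by barycentrically subdividing the $\indepsets{f}$-part of $\Delta'$ (so $\Delta''\cong\Delta'$ as spaces), then defines the simplicial map $\pi\colon y_I\mapsto x_{f(I)}$, $x_F\mapsto x_F$ to $\Cone(\Berg{f})=\Delta(\flats{f}\setminus\{E\})$ and invokes Quillen's Fiber Lemma (the fiber over $\Delta\flats{f}_{\le F}$ is the star of $x_F$) to get a homotopy equivalence, in fact a deformation retraction, onto the contractible cone. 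You instead build an acyclic discrete Morse matching on $\Delta'$ that toggles the vertex $x_{f(I)}$ on every face with $I\neq\varnothing$; your verifications are sound (for $\sigma\in\Delta'$ with $I\neq\varnothing$ one indeed has $f(I)\neq E$, since a face whose $y$-part is a basis admits no $x$-vertices and hence is one of the deleted facets; the toggle preserves $I$ and stays in $\Delta'$; and the $\#I$-monotonicity argument correctly confines any would-be cycle to a single fiber, where toggling the minimum element $f(I)$ of the interval $[f(I),E)$ is acyclic by the standard argument). Since the critical cells form the subcomplex $\Cone(\Berg{f})$, the standard collapsing theorem of discrete Morse theory gives $\Delta'\searrow\Cone(\Berg{f})$, which is a cone on $x_{f(\varnothing)}$, hence contractible. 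Your approach is more elementary and self-contained (no subdivision, no Quillen Fiber Lemma) and yields the slightly stronger conclusion that $\Delta'$ collapses onto the cone (indeed is collapsible); the paper's approach buys an explicit geometric picture, identifying $\Delta'$ up to homeomorphism with $\Delta''$ and exhibiting $\pi$ as a deformation retraction, which it then reuses in the remark about retracting $\Delta''\setminus\{x_{f(\varnothing)}\}$ onto $\Berg{f}$. The only cosmetic gaps in your write-up are that the justification of $f(I)\neq E$ should be spelled out as above, and the final collapse should cite the standard fact that an acyclic matching whose critical cells form a subcomplex induces a collapse onto that subcomplex.
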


\begin{proof}
In light of Lemma~\ref{homology-facets-lemma}, it suffices to show that
$\Delta=\AugBerg{f}$ has its subcomplex $\Delta':=\Delta \setminus \cB$ contractible. Our proof strategy introduces another simplicial complex $\Delta''$, and shows it has these two properties:
\begin{itemize}
    \item[(a)] $\Delta''$ is a simplicial subdivision of $\Delta'$, and hence homeomorphic to it.
    \item[(b)] $\Delta''$ is homotopy equivalent to the subcomplex $\Cone(\Berg{f})=\Delta(\cF \setminus \{E\})$ inside $\AugBerg{f}$.
\end{itemize}
Since cones are contractible, (b) would show  $\Delta''$ is contractible, and then (a) would show the same for $\Delta'$.
We define $\Delta''$ to be the simplicial complex on vertex
set 
$$
\{y_I: I \in \indepsets{f} \setminus \bases{f} \} \cup \{x_F: F \in \cF \setminus \{E\} \}
$$
whose simplices are subsets
$
\{y_{I_1}, \cdots, y_{I_k}\} 
\cup
\{x_{F_1},\ldots,x_{F_\ell} \}
$
with
$
I_1 \subsetneq  \cdots \subsetneq I_k \subseteq 
 F_1 \subsetneq 
   \cdots
  \subsetneq F_\ell (\subsetneq E).
$

\medskip
\noindent
{\sf Proof of assertion (a). }
One can view $\Delta''$ as having been obtained from $\Delta'$ by performing
{\it barycentric subdivision} \cite[\S 2.15]{Munkres} $\sigma \mapsto \mathrm{Sd}(\sigma)$ to every simplex within the subcomplex $\indepsets{f} \setminus \bases{f}$ of $\Delta'$; each simplex $\sigma=\{y_i\}_{i \in I}$ is replaced by
the simplices $\{y_{I_1},\ldots,y_{I_k}\}$
for which
$
I_1 \subsetneq I_2 \subsetneq \cdots \subsetneq I_k \subseteq I.
$
More generally, the typical simplex of $\Delta'$
as in \eqref{typical-closure-augberg-simplex}
is the {\it simplicial join} $\sigma * \sigma'$ 
\cite[\S 8.62]{Munkres}
of the simplex $\sigma=\{y_i\}_{i \in I}$ above and
the simplex $\sigma'=\{x_{F_1},\ldots,x_{F_\ell}\}$;
one replaces this with the simplicial join 
$\mathrm{Sd}(\sigma) * \sigma'$ in $\Delta''$.

\medskip
\noindent
{\sf Proof of assertion (b). }
Attempt to define a simplicial map $\Delta'' \overset{\pi}{\longrightarrow} \Delta(\cF \setminus \{E\})$ via this set map on vertices:
$$
\begin{array}{rcll}
x_F & \longmapsto & x_F& \text{ for }F \in \flats{f} \setminus \{E\} \\
y_I & \longmapsto & x_{f(I)} & \text{ for }I \in \indepsets{f} \setminus \bases{f}\\
\end{array}
$$
It is not hard to check that $\pi$ indeed carries simplices to simplices, that is, it is a well-defined
simplicial map. One can also check that, for every element $F$ in the poset $\flats{f} \setminus \{E\}$, the inverse image under $\pi$ of the
order complex $\Delta \flats{f}_{\leq F}$ of the principal order ideal $\flats{f}_{\leq F}$ is the star of the vertex $x_F$ within $\Delta''$, and hence contractible. Thus by Quillen's Fiber Lemma \cite[(10.5)(i)]{Bjorner-handbook},
the map $\pi$ induces a homotopy equivalence.
\end{proof}

\begin{remark}
Note that $\Cone(\Berg{M})=\Delta(\flats{f} \setminus \{E\})$ can be identified with
the subcomplex of $\Delta''$ induced on the vertex subset $\{x_F: F \in \flats{f} \setminus \{E\}\}$.
Since these vertices $x_F$ are all pointwise fixed by $\pi$, the same is true for
this subcomplex $\Cone(\Berg{M})$, so that  the map $\pi$ is actually a homotopy inverse to the inclusion map $\Cone(\Berg{M}) \hookrightarrow \Delta''$, showing that $\pi$ is a {\it deformation retraction}. 

In fact, if one removes the vertex $x_{f(\varnothing)}$ from both $\Delta', \Delta''$, one finds that
$\Delta'' \setminus \{x_{f(\varnothing)}\}$ is a subdivision of $\Delta' \setminus \{x_{f(\varnothing)}\}$,
and the  simplicial map $\pi$ also restricts
to a deformation retraction
\begin{equation}
\label{de-coned-retraction-map}
\Delta'' \setminus \{x_{f(\varnothing)}\} \overset{\pi}{\longrightarrow}
\Berg{f}.
\end{equation}
This is depicted for the closure $f$ from Example~\ref{closure-example} in Figure~\ref{exampleofpi}. The top row shows 
$\Delta' \setminus \{x_{f(\varnothing)}\}$.
The bottom row depicts the subdivision $\Delta'' \setminus \{x_{f(\varnothing)}\}$ along with the simplicial map  $\pi$ indicated by directed arrows along edges;
to its right is the subcomplex $\Berg{f}$ onto which it retracts.
\end{remark}

\begin{figure}[h] 

\begin{center}
\begin{minipage}{.2\textwidth}
 \hspace*{-0.6\linewidth}
\begin{tikzpicture}
  [scale=.30,auto=left,every node/.style={circle,fill=black!20}]
   \draw[fill=gray!50](24,8) -- (24,4) -- (16,13)-- (24,8);
  \draw[fill=gray!50](8,12) -- (8,8) -- (16,8)-- (8,12);
  \draw[fill=gray!50](8,12) -- (8,8) -- (0,8)-- (8,12);
  \draw[fill=gray!50](4,4) -- (8,8) -- (0,8)-- (4,4);
  \draw[fill=gray!50](4,4) -- (0,8) -- (0,4)-- (4,4);
  \draw[fill=gray!50](4,4) -- (8,0) -- (0,4)-- (4,4);
  \draw[fill=gray!50](4,4) -- (8,0) -- (12,4)-- (4,4);
  \draw[fill=gray!50](16,4) -- (8,0) -- (12,4)-- (16,4);
  \draw[fill=gray!50](16,4) -- (16,8) -- (12,4)-- (16,4);      
  \draw[fill=gray!50](8,8) -- (16,8) -- (12,4)-- (8,8);
  \node (y1) at (8,8) {$y_1$};
  \node (y2) at (12,4) {$y_2$};
  \node (y3) at (4,4) {$y_3$};
  \node (y4) at (16,13) {$y_4$};
  \node (y5) at (16,-3) {$y_5$};
  \node (x1) at (8,12) {$x_1$};
  \node (x2) at (16,4) {$x_2$};
  \node (x3) at (0,4) {$x_3$};
  \node (x12) at (16,8) {$x_{12}$};
  \node (x13) at (0,8) {$x_{13}$};
  \node (x23) at (8,0) {$x_{23}$};
  \node (x4) at (24,8) {$x_4$};
  \node (x45) at (24,4) {$x_{45}$};
  \foreach \from/\to in {y5/x45}  \draw (\from) -- (\to);
\end{tikzpicture}
\end{minipage}
\end{center}

\begin{center}

\begin{minipage}{0.1\textwidth}
 \hspace*{-3\linewidth}
\begin{tikzpicture}
  [scale=.35,auto=left,every node/.style={circle,fill=black!20}]
\tikzstyle arrowstyle=[scale=1]
\tikzstyle directed=[postaction={decorate,decoration={markings,    mark=at position .50 with {\arrow[arrowstyle]{stealth}}}}]
   \draw[fill=gray!50](24,8) -- (24,4) -- (16,13)-- (24,8);
  \draw[fill=gray!50](8,12) -- (8,8) -- (16,8)-- (8,12);
  \draw[fill=gray!50](8,12) -- (8,8) -- (0,8)-- (8,12);
  \draw[fill=gray!50](4,4) -- (0,8) -- (0,4)-- (4,4);
  \draw[fill=gray!50](4,4) -- (8,0) -- (0,4)-- (4,4);
  \draw[fill=gray!50](16,4) -- (8,0) -- (12,4)-- (16,4);
  \draw[fill=gray!50](16,4) -- (16,8) -- (12,4)-- (16,4);
 \draw[fill=gray!50](6,6) -- (8,8) -- (0,8)-- (6,6);
 \draw[fill=gray!50](6,6) -- (4,4) -- (0,8)-- (6,6);
  \draw[fill=gray!50](8,8) -- (16,8) -- (10,6)-- (8,8);
  \draw[fill=gray!50](12,4) -- (16,8) -- (10,6)-- (12,4);
  \draw[fill=gray!50](4,4) -- (8,0) -- (8,4)-- (4,4);
  \draw[fill=gray!50](12,4) -- (8,0) -- (8,4)-- (12,4);
   \node (y1) at (8,8) {$y_1$};
  \node (y2) at (12,4) {$y_2$};
  \node (y3) at (4,4) {$y_3$};
  \node (y4) at (16,13) {$y_4$};
  \node (y5) at (16,-3) {$y_5$};
  \node (x1) at (8,12) {$x_1$};
  \node (x2) at (16,4) {$x_2$};
  \node (x3) at (0,4) {$x_3$};
  \node (x12) at (16,8) {$x_{12}$};
  \node (x13) at (0,8) {$x_{13}$};
  \node (x23) at (8,0) {$x_{23}$};
  \node (x4) at (24,8) {$x_4$};
  \node (x45) at (24,4) {$x_{45}$};
  \node (y12) at (10,6) {$y_{12}$};
  \node (y13) at (6,6) {$y_{13}$};
  \node (y23) at (8,4) {$y_{23}$};
    \draw [directed] (y1) -- (x1);
    \draw [directed] (y2) -- (x2);
    \draw [directed] (y3) -- (x3);
    \draw [directed] (y12) -- (x12);
    \draw [directed] (y13) -- (x13);
    \draw [directed] (y23) -- (x23);
    \draw [directed] (y4) -- (x4);
    \draw [directed] (y5) -- (x45);
\end{tikzpicture}
\end{minipage}
\qquad\qquad\qquad\qquad\qquad\qquad\qquad 
\begin{minipage}{0.1\textwidth}
\begin{tikzpicture}
  [scale=.25,auto=left,every node/.style={circle,fill=black!20}]
\tikzstyle arrowstyle=[scale=1]
\tikzstyle directed=[postaction={decorate,decoration={markings,    mark=at position .50 with {\arrow[arrowstyle]{stealth}}}}]
  \node (x1) at (5,12) {$x_1$};
  \node (x2) at (10,4) {$x_2$};
  \node (x3) at (0,4) {$x_3$};
  \node (x12) at (10,8) {$x_{12}$};
  \node (x13) at (0,8) {$x_{13}$};
  \node (x23) at (5,0) {$x_{23}$};
  \node (x4) at (15,8) {$x_4$};
  \node (x45) at (15,4) {$x_{45}$};
 \foreach \from/\to in {x1/x12,x1/x13,x2/x12,x2/x23,x3/x13,x3/x23,x4/x45}
    \draw (\from) -- (\to);
\end{tikzpicture}
\end{minipage}

\end{center}

\label{exampleofpi}
\caption{
Continuing
Example~\ref{closure-example},
the deleted subcomplex
$\Delta' \setminus \{x_{f(\varnothing)}\}$
is shown at top.
The second line shows its subdivision $\Delta'' \setminus \{x_{f(\varnothing)}\}$,
along with directed arrows indicating the
retraction $\pi:\Delta'' \setminus \{x_{f(\varnothing)}\} \rightarrow \Berg{f}$ from \eqref{de-coned-retraction-map}.
}
\end{figure}


\section*{Acknowledgements}
This project was partially supported by NSF RTG grant DMS-1148634, as part of the University of Minnesota School of Mathematics Summer 2021 REU program. The authors thank Trevor Karn and Sasha Pevzner for presenting useful background and for valuable edits. 

\bibliography{bibliography}{}
\bibliographystyle{amsplain}

\end{document}